\newtheorem{Th}{Theorem}[section]
\newtheorem{Lem}[Th]{Lemma}
\newtheorem{Prop}[Th]{Proposition}
\newtheorem{Res}[Th]{Result}
\theoremstyle{definition}
\newtheorem{Def}[Th]{Definition}
\newtheorem{Not}[Th]{Note}
\newtheorem{Ex}[Th]{Example}
\newcommand{\R}{\mathbb R}
\newcommand{\N}{\mathbb N}
\newcommand{\C}{\mathbb C}
\newcommand{\K}{\mathbb K}
\newcommand{\X}{\mathcal X}
\newcommand{\com}[2]{\mathscr C_{#2}(#1)}
\begin{document}

\title{Some Special Sets in an Exponential Vector Space}

\author{Priti Sharma\footnote{Bangabasi College, University of Calcutta, 19, Rajkumar Chakraborty Sarani, Kolkata-700009, INDIA, e-mail : mspriti23@gmail.com} and Sandip Jana\footnote{Department of Pure Mathematics, University of Calcutta, 35, Ballygunge 
Circular Road, Kolkata-700019, INDIA, e-mail : sjpm@caluniv.ac.in}}
\date{}
\maketitle

\maketitle

\begin{abstract}
In this paper, we have studied `absorbing' and `balanced' sets in an Exponential Vector Space (\emph{evs} in short) over the field $\K$ of real or complex. These sets play pivotal role to describe several aspects of a topological evs. We have characterised a local base at  the additive identity in terms of balanced and absorbing sets in a topological evs over the field $\K$. Also, we have found a sufficient condition under which an evs can be topologised to form a topological evs. Next, we have introduced the concept of `bounded sets' in a topological evs over the field $\K$ and characterised them with the help of balanced sets. Also we have shown that compactness implies boundedness of a set in a topological evs. In the last section we have introduced the concept of `radial' evs which characterises an evs over the field $\K$ up to order-isomorphism. Also, we have shown that every topological evs is radial. Further, it has been shown that ``the usual subspace topology is the finest topology with respect to which $[0,\infty)$ forms a topological evs over the field $\K$''.
\end{abstract}

AMS Subject Classification : 46A99, 06F99.

 Key words : topological exponential vector space, order-isomorphism, absorbing set, balanced set, bounded set, radial evs.

\section{Introduction}

Exponential vector space is a new algebraic structure consisting of a semigroup, a scalar multiplication and a compatible partial order which can be thought of as an algebraic axiomatisation of hyperspace in topology; in fact, the hyperspace $\com{\X}{}$ consisting of all non-empty compact subsets of a Hausd\"{o}rff topological vector space $\X$ was the motivating example for the introduction of this new structure. It was introduced by S. Ganguly et al. in \cite{C(X)} with the name ``\emph{quasi-vector space}'' (in short ``\emph{qvs}''). In $\com{\X}{}$ we can find the following properties if addition and scalar multiplication are defined by:  $A+B:=\{a+b:a\in A,b\in B\}$ and $\alpha A:=\{\alpha a:a\in A\}$ where, $A,B\in\com{\X}{}$ and $\alpha$ belongs to the scalar field of $\X$.\\
(i) $\com{\X}{}$ is closed under addition and scalar multiplication.\\
(ii) if $A\subseteq B$ then $\alpha A\subseteq \alpha B$, for any scalar $\alpha$ (not necessarily non-negative).\\
(iii) $(\alpha+\beta)A\subseteq\alpha A+\beta A$, for any scalar $\alpha,\beta$ and any $A\in\com{\X}{}$; equality holds iff $A$ is a singleton set.\\
(iv) Singletons are the only invertible elements with respect to the aforesaid addition, $\{\theta\}$ acting as the identity (if $\theta$ is the identity of $\X$).

All these facts were considered to formulate the axioms of quasi-vector space. Although $\com{\X}{}$ is the ``founder'' example of this structure, a large number of examples of such algebraic structure have been found in various branches of mathematics viz. in number system, theory of matrices, collection of positive measures, collection of non-negative functions, lattice theory, collection of subspaces of vector space, to name a few (see \cite{mor},\cite{qvs},\cite{spri},\cite{bal}).

However, we think that this newly defined structure is not merely a generalisation of vector space bearing the name ``\emph{quasi-vector space}'', rather the axioms of this new structure evolve a very rapid growth of the elements of the structure with respect to the partial order and also evoke some sort of positiveness in each element. Meanwhile, a vector space is evolved within this structure and positivity of each element of the new structure is judged with respect to the elements of the vector space generated. All these facts emerge some sort of exponential flavour within the structure. Considering the importance and influence of the partial order which prevails an essence of hyperspace in the entire structure we find that the name ``\emph{exponential vector space}'' should be a suitable nomenclature for the structure previously called ``\emph{quasi-vector space}''. The formal definition of an exponential vector space is as follows :

\begin{Def}\cite{evs} Let $(X,\leq)$ be a partially ordered set, `$+$' be a binary operation on 
$X$ [called \emph{addition}] and `$\cdot$'$:K\times X\longrightarrow X$ be another composition [called \emph{scalar multiplication}, $K$ being a field]. If the operations and partial order satisfy the following axioms then $(X,+,\cdot,\leq)$ is called an \emph{exponential vector space} (in short \emph{evs}) over $K$ [This structure was  initiated with the name  \emph{quasi-vector space} or \emph{qvs} by S. Ganguly et al. in \cite{C(X)} ].
\begin{align*}
A_1&: (X,+)\text{ is a commutative semigroup with identity } \theta\\  
A_2&: x\leq y\ (x,y\in X)\Rightarrow x+z\leq y+z\text{ and } \alpha\cdot x\leq
\alpha\cdot y,\  \forall z\in X,  \forall \alpha\in K\\ 
A_3&:\text{(i)}\ \alpha\cdot(x+y)=\alpha\cdot x+\alpha\cdot y\\ 
&\quad\text{(ii)}\ \ \alpha\cdot(\beta\cdot x)=(\alpha\beta)\cdot x\\ 
&\quad\text{(iii)}\  \ (\alpha+\beta)\cdot x \leq \alpha\cdot x+\beta \cdot x\\
&\quad\text{(iv)}\ \ 1\cdot x=x,\text{ where `1' is the multiplicative identity in }K,\\ 
& \forall\,x,y\in X,\  \forall\,\alpha,\beta\in K \\
A_4&: \alpha\cdot x=\theta \text{ iff }\alpha=0\text{ or }x=\theta\\ 
A_5&: x+(-1)\cdot x=\theta\text{ iff } x\in X_0:=\big\{z\in X:\ y\not\leq z,\,\forall\,y \in X\smallsetminus\{z\}\big \}\\ 
A_6&: \text{For each }x \in X,\, \exists\,p\in X_0\text{ such that }p\leq x.
\end{align*}   
\label{d:evs}\end{Def}

In the above definition, $X_0$ is precisely the set of all minimal elements of the evs $X$ with respect to the partial order of $X$ and forms the maximum vector space (within $X$) over the same field as that of $X$ (\cite{C(X)}). We call this vector space $X_0$ as the `\emph{primitive space}' or `\emph{zero space}' of $X$ and the elements of $X_0$ as `\emph{primitive elements}'. Conversely, given any vector space $V$, an evs $X$ can be constructed (as shown below in example \ref{e:0}) such that $V$ is isomorphic to $X_0$. In this sense, ``exponential vector space'' can be considered as an algebraic ordered extension of vector space. The axiom $ A_3 $(iii) expresses very rapid growth of the non-primitive elements, since $ x\leq \frac{1}{2}x+\frac{1}{2}x,\forall\,x\notin X_0 $; whereas axiom $ A_6 $ demonstrates `positivity' of all elements with respect to primitive elements.

\begin{Ex} \cite{evs}
Let $X:=\big\{(r,a)\in\R\times V:r\geq0,a\in V\big\}$, where $V$ is a vector space over some field $K$. Define operations and partial order on $X$ as follows : for $(r,a),(s,b)\in X$ and $\alpha\in K$,\\
(i) $(r,a)+(s,b):=(r+s,a+b)$;\\ 
(ii) $\alpha(r,a):=(r,\alpha a)$, if $\alpha\neq0$ and $0(r,a):=(0,\theta)$, $\theta$ being the identity in $V$;\\ 
(iii) $(r,a)\leq(s,b)$ iff $r\leq s$ and $a=b$.\\ Then $X$ becomes an exponential vector space over $K$ with the primitive space $\{0\}\times V$ which is evidently isomorphic to $V$.
\label{e:0}\end{Ex}

For topologising an exponential vector space, we need the following concepts:

\begin{Def}\cite{Nach} Let `$\leq$' be a preorder in a topological
space $Z$; the preorder is said to be \emph{closed} if its graph
$G_{\leq}(Z):=\big\{(x,y)\in Z\times Z:x\leq y\big\}$ is closed in $Z\times Z$ (endowed
with the product topology).
\end{Def}

\begin{Th} {\em\cite{Nach}} A partial order `$\leq$' in a topological
space $Z$ will be a closed order iff for any $x,y\in Z$ with
$x\not\leq y$, $\exists$ open neighbourhoods $U,V$ of $x,y$ respectively in
$Z$ such that $(\uparrow U)\cap(\downarrow V)=\emptyset$, where
$\uparrow U:=\{x\in Z:x\geq u \text{ for some }u\in U\}$ and
$\downarrow V:=\{x\in Z:x\leq v \text{ for some }v\in V\}$.
\label{ch1:t:closedorder}\end{Th}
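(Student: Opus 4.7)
The plan is to prove both directions by unpacking what it means for the graph $G_{\leq}(Z)$ to be closed and translating this into separation of $\uparrow U$ and $\downarrow V$ via the transitivity of the order.

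For the forward direction, I would start from the assumption that $G_{\leq}(Z)$ is closed in $Z\times Z$. Given $x\not\leq y$, the point $(x,y)$ lies in the open complement $(Z\times Z)\setminus G_{\leq}(Z)$, so there exist open neighbourhoods $U$ of $x$ and $V$ of $y$ with $(U\times V)\cap G_{\leq}(Z)=\emptyset$; equivalently, $u\not\leq v$ for every $u\in U$ and $v\in V$. To conclude $(\uparrow U)\cap(\downarrow V)=\emptyset$, I would argue by contradiction: any $z$ in this intersection yields $u\in U$ with $u\leq z$ and $v\in V$ with $z\leq v$, whence transitivity gives $u\leq v$, contradicting the choice of $U,V$.

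For the reverse direction, I would show that the complement of $G_{\leq}(Z)$ is open. Take $(x,y)\notin G_{\leq}(Z)$, i.e.\ $x\not\leq y$, and pick open neighbourhoods $U$ of $x$ and $V$ of $y$ given by the hypothesis, so that $(\uparrow U)\cap(\downarrow V)=\emptyset$. It remains to check $(U\times V)\cap G_{\leq}(Z)=\emptyset$: if $(u,v)\in U\times V$ with $u\leq v$, then $v\in\uparrow U$ (witnessed by $u$) and $v\in\downarrow V$ (witnessed by $v$ itself, using reflexivity), forcing $v\in(\uparrow U)\cap(\downarrow V)$, a contradiction.

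Neither direction presents a real obstacle; the only subtle point is making sure to use both reflexivity (so that each $v\in V$ lies in $\downarrow V$) and transitivity (to chain $u\leq z\leq v$) at the right moments. These two properties of the partial order are precisely what allow the set-theoretic operations $\uparrow$ and $\downarrow$ to encode the closedness of the graph faithfully.
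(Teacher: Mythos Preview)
Your proof is correct in both directions: the forward direction uses openness of the complement of $G_{\leq}(Z)$ together with transitivity, and the converse uses reflexivity to place $v\in\downarrow V$, exactly as you note. There is nothing to compare against, however: the paper does not supply its own proof of this theorem but simply quotes it from Nachbin's book \cite{Nach} and uses it as a standing tool (e.g.\ in Results~\ref{ch4:r:4} and Theorem~\ref{ch4:t:sc}). Your argument is the standard one and would serve perfectly well as the omitted proof.
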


\begin{Def}\cite{evs} An exponential vector space $X$ over the field $\K$ of real or complex numbers is said to be a \emph{topological exponential vector space} if $X$ has a
topological structure with respect to which the addition, scalar
multiplication are continuous and the partial order `$\leq$' is
closed (Here $\K$ is equipped with the usual topology).
\end{Def}

From the definition it follows that if $ X $ is a topological exponential vector space over the field $ \K $, then the primitive space $ X_0 $ becomes a topological vector space, since restriction of a continuous function is continuous. Moreover from the characterisation of closed order (theorem \ref{ch1:t:closedorder}) it follows that every topological evs is Hausd\"{o}rff and hence the primitive space $ X_0 $ becomes a Hausd\"{o}rff topological vector space over $ \K $. We first cite an example which will be useful in the sequel.

\begin{Ex}\cite{mor} Let $X:=[0,\infty)\times V$, where $V$ is a vector space over the field $\K$ of real or complex numbers. Define operations and partial order on $X$ as follows : for $(r,a),(s,b)\in X$ and $\alpha\in\K$,\\
	(i) $(r,a)+(s,b):=(r+s,a+b)$\\ 
	(ii) $\alpha(r,a):=(|\alpha|r,\alpha a)$\\ 
	(iii) $(r,a)\leq(s,b)$ iff $r\leq s$ and $a=b$.\\
	Then $[0,\infty)\times V$ becomes an exponential vector space with the primitive space $\{0\}\times V$ which is clearly isomorphic to $V$.
	
	In this example, if we consider $V$ as a Hausd\"{o}rff topological vector space then $[0,\infty)\times V$ becomes a topological exponential vector space with respect to the product topology, where $[0,\infty)$ is equipped with the subspace topology inherited from the real line $\R$.

	Instead of $V$ if we take the trivial vector space $\{\theta\}$ in the above example, then the resulting topological evs is $[0,\infty)\times\{\theta\}$ which can be clearly identified with the half ray $[0,\infty)$ of the real line. Thus, $[0,\infty)$ forms a topological evs over the field $\K$.
	\label{ch1:e:0inf}\end{Ex}

\begin{Def}\cite{mor} A mapping $f:X\longrightarrow Y$ ($X,Y$ being two exponential vector spaces over a common field $K$) is called an \emph{order-morphism} if\\
	(i) $f(x+y)=f(x)+f(y)$, $\forall\,x,y\in X$\\
	(ii) $f(\alpha x)=\alpha f(x)$, $\forall\,\alpha\in K$, $\forall\,x\in X$\\
	(iii) $x\leq y$ $(x,y\in X)\Rightarrow f(x)\leq f(y)$\\
	(iv) $p\leq q$ $\big(p,q\in f(X)\big)\Rightarrow f^{-1}(p)\subseteq\downarrow f^{-1}(q)$
	and $f^{-1}(q)\subseteq\uparrow f^{-1}(p)$.
	
	A bijective order-morphism is called an \emph{order-isomorphism}.
	\label{mor}\end{Def}

\begin{Def}\cite{evs}
	A property of an evs is called an \textit{evs property} if it remains invariant under order-isomorphism.	
\end{Def}

In this paper, we have studied `absorbing' and `balanced' sets in an evs over the field $\K$ of real or complex. These sets play pivotal role to describe several aspects of a topological evs. We have characterised a local base at  the additive identity in terms of balanced and absorbing sets in a topological evs over the field $\K$. Also, we have found a sufficient condition under which an evs can be topologised to form a topological evs. Next, we have introduced the concept of `bounded sets' in a topological evs over the field $\K$ and characterised them with the help of balanced sets. Also we have shown that compactness implies boundedness of a set in a topological evs. 

In the last section we have introduced the concept of `radial' evs which characterises an evs over the field $\K$ up to order-isomorphism. Also, we have shown that every topological evs is radial. Further, it has been shown that ``the usual subspace topology is the finest topology with respect to which $[0,\infty)$ forms a topological evs over the field $\K$''.

\section{Prerequisites}

\begin{Def} \cite{spri} A subset $Y$ of an exponential vector space $X$ is said to be a \emph{sub exponential vector space} 
(\emph{subevs} in short) if $Y$ itself is an exponential vector space with all the compositions of 
$X$ being restricted to $Y$.
\label{d:subevs}\end{Def} 

\begin{Not} \cite{spri} A subset $Y$ of an exponential vector space $X$ over a field $K$ is a sub exponential vector space iff $Y$ satisfies the following:\\ 
(i) $\alpha x+y\in Y,\ \forall\,\alpha\in K$, $\forall\,x,y \in Y$.\\ 
(ii) $Y_{0}\subseteq X_{0}\bigcap Y$, where $Y_0:=\big\{z\in Y:y\nleq z,\forall\,y\in Y\smallsetminus\{z\}\big\}$\\ 
(iii) $\forall\, y\in Y$, $\exists\,p\in Y_{0}$ such that $p\leq y$. 

If $Y$ is a subevs of $X$ then actually $Y_0=X_0\cap Y$, since for any $Y\subseteq X$
we have $X_0\cap Y\subseteq Y_0$.
\label{n:subevs}\end{Not}

\begin{Def} Let $\{X_i:i\in\Lambda\}$ be an arbitrary family of exponential vector spaces over a common field $K$ and $X:=\displaystyle\prod_{i\in\Lambda}X_i$ be the Cartesian product. Then, $X$ becomes an exponential vector space over $K$ with respect to the following operations and partial order (see section 5 of \cite{qvs}) :

For $x=(x_i)_i, y=(y_i)_i\in X$ and $\alpha\in K$ we define (i) $x+y:=(x_i+y_i)_i$, (ii) $\alpha x:=(\alpha x_i)_i$, (iii) $x\ll y$ iff $x_i\leq y_i$, $\forall\,i\in\Lambda$.\\
Here the notation $x=(x_i)_i\in X$ means that the point $x\in X$ is the map $x:i\mapsto x_i\, (i\in\Lambda)$, where $x_i\in X_i,\ \forall\,i\in\Lambda$. The additive identity of $X$ is given by $\theta=(\theta_i)_i$, $\theta_i$ being the additive identity of $X_i,\ \forall\,i\in\Lambda$. Also the set of all primitive elements of $X$ is given by $X_0=\displaystyle\prod_{i\in\Lambda}[X_i]_0$.
\end{Def}

\begin{Def}\cite{spri} In an evs $X$, the \textit{primitive} of any $x\in X$ is defined as the set\\
\centerline{$P_x:=\{p\in X_0 : p\leq x\}$.} The axiom $A_6$ of the definition \ref{d:evs} ensures that the primitive of each element of an evs is nonempty. Also,  $P_{\alpha x}=\alpha P_x, \forall x\in X, \forall \alpha\in \mathbb{K}.$
\end{Def}

\begin{Ex}\cite{spri} Let $\X$ be a vector space over the field $\K$ of real or complex numbers. Let
$\mathscr L(\X)$ be the set of all linear subspaces of $\X$. We now define $+,\cdot,\leq$ on $\mathscr L(\X)$ as follows :
For $\X_1,\X_2\in\mathscr L(\X)$ and $\alpha\in\K$, \\
(i) $\X_1+\X_2:=$ span$(\X_1\cup \X_2)$, (ii) $\alpha\cdot \X_1:=\X_1$, if $\alpha\neq0$
and $\alpha\cdot \X_1:=\{\theta\}$, if $\alpha =0$
($\theta$ being the additive identity of $\X$), (iii) $\X_1\leq \X_2$ iff 
$\X_1\subseteq \X_2$.

Then $\big(\mathscr L(\X),+,\cdot,\leq\big)$ is a non-topological evs over $\K$.
\label{ch1:e:LX}\end{Ex}

\begin{Ex}\cite{bal}
Let us consider $\mathscr D^2([0,\infty)):=[0,\infty)\times [0,\infty)$.
We define $+,\cdot,\leq$ on $\mathscr D^2([0,\infty))$ as follows :\\
For $(x_1,y_1),(x_2,y_2)\in\mathscr D^2([0,\infty))$ and $\alpha\in\C$ we define\\
(i) $(x_1,y_1)+(x_2,y_2)=(x_1+x_2,y_1+y_2)$\\
(ii) $\alpha\cdot(x_1,y_1)=(|\alpha|x_1,|\alpha|y_1)$\\
(iii) $(x_1,y_1)\leq (x_2,y_2)\Longleftrightarrow$ either $x_1<x_2$ or if
$x_1=x_2$ then $y_1\leq y_2$ [\emph{dictionary order}].
Then, $\big(\mathscr D^2([0,\infty)),+,\cdot,\leq\big)$ forms a non-topological evs over $\C$.
\label{ch1:e:d2inf}\end{Ex}

\section{Basics of Absorbing and Balanced Sets}\markright{Basics of Absorbing and Balanced Sets}

In this section we will define absorbing and balanced sets in an evs and discuss their general properties. 

\begin{Def} Let $X$ be an evs over the field $\K$ of real or complex numbers. A set $A\subseteq X$ is called an \emph{absorbing set}\index{Absorbing set} if for any $x \in X$, $\exists$ a real number $\alpha$ > 0 such that $\mu x \in A,  \forall \mu \in \mathbb{K}$ with $|\mu|\leq \alpha$.
\label{ch4:d:abs}\end{Def}

Clearly, for any absorbing set $A(\subseteq X)$ and $x \in X$, 0.$x \in A$ i.e. $\theta \in A$ [ $\theta$ being the additive identity of $X$ ].

Let $A_i$ be absorbing subsets of $X$, $\forall\, i=1,2,...n$. Then for any $x \in X$, $\exists\  \alpha_i > 0$ such that $\mu x \in A_i$, $\forall \mu \in \mathbb{K}$ with $|\mu|\leq \alpha_i$, $\forall\,i=1,2,...n$. If we set $\alpha:=\displaystyle\min_{1 \leq i \leq n }\alpha_i$, then $\mu x \in \bigcap \limits_{i=1}^{n}A_i$, $\forall\,\mu \in \mathbb{K}$ with $|\mu|\leq\alpha$. Since $x \in X$ is arbitrary, so $\bigcap \limits_{i=1}^{n}A_i$ is also an absorbing subset of $X$. 

Let $A$ be an absorbing subset of $X$ and $A \subseteq B\subseteq X$. So, for any $x \in X, \exists \ \alpha > 0$ such that $\mu x \in A \subseteq B$, $\forall \mu \in \mathbb{K}$ with $|\mu|\leq \alpha$. Hence $B$ is also an absorbing subset of $X$. Now since $A \subseteq \uparrow A$ and $A\subseteq\downarrow A$, so both $\uparrow A \text{ and} \downarrow A$ are absorbing subsets of $X$ whenever $A$ is so.

Again, let $A$ be an absorbing subset of $X$ and $\lambda \in \mathbb{K}^*\equiv\K\smallsetminus\{0\}$. Then for any $x \in X$, $\exists\, \alpha> 0$ such that $\mu x \in A$, $\forall \mu \in \mathbb{K}$ with $|\mu| \leq \alpha$. So, $\beta x \in \lambda A$, $\forall \beta \in \mathbb{K}$ with $|\beta |$ $\leq$ $\alpha | \lambda |$ where, $\alpha | \lambda | > 0$ . Since $x \in X$ is arbitrary, $\lambda A$ is an absorbing subset of $X$.

From the above discussion we can conclude the following basic facts:

\begin{Prop}Let $X$ be an evs over the field $\K$ of real or complex numbers. Then,
\\ (i) $A \subseteq X$ is absorbing $\Longrightarrow \theta \in A$ $[$ $\theta$ being the additive identity of $X$ $]$.
\\(ii) Intersection of a finite collection of absorbing sets is an absorbing set.
\\(iii) Superset of an absorbing set is absorbing and hence union of any collection of absorbing sets is absorbing.
\\(iv) $A \subseteq X$ is absorbing $\Longrightarrow \uparrow A\text{ and} \downarrow A$
are absorbing.
 \\(v) $A \subseteq X$ is absorbing and $\lambda \in \mathbb{K}^* \Longrightarrow \lambda A$ is absorbing.

\label{ch4:p:a1}\end{Prop}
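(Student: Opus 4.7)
The statement is a compilation of five elementary closure properties of the class of absorbing sets, and in fact the discussion immediately preceding the proposition already contains the substantive content of all five verifications. My plan is therefore to organise that discussion into five short paragraphs, each amounting to an unwinding of Definition \ref{ch4:d:abs} together with one small observation.

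For (i), I would fix any $x\in X$, take the absorbing radius $\alpha>0$ furnished by the definition, and specialise to the scalar $\mu=0$, which satisfies $|\mu|\leq\alpha$; then $0\cdot x=\theta$ lies in $A$ by axiom $A_4$ (applied to the scalar $0$). For (ii), given finitely many absorbing sets $A_1,\dots,A_n$ and an $x\in X$, I would collect the corresponding radii $\alpha_1,\dots,\alpha_n$ and take $\alpha:=\min_i\alpha_i>0$; then for every scalar $\mu$ with $|\mu|\leq\alpha$ the element $\mu x$ lies in each $A_i$ and hence in the intersection. For (iii), if $A$ is absorbing and $A\subseteq B$, then the radius that works for $A$ also works for $B$; the union assertion is then immediate since the union contains each member as a subset.

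Part (iv) is a one-line corollary of (iii), since $A\subseteq{\uparrow}A$ and $A\subseteq{\downarrow}A$ by definition of the up- and down-sets. For (v), let $A$ be absorbing and let $\lambda\in\K^{*}$. Fix any $x\in X$ and let $\alpha>0$ be the absorbing radius of $A$ at $x$. I claim that $\beta:=\alpha|\lambda|>0$ works as an absorbing radius of $\lambda A$ at $x$. Indeed, for any scalar $\nu$ with $|\nu|\leq\beta$, setting $\mu:=\nu/\lambda$ gives $|\mu|\leq\alpha$, so $\mu x\in A$ and therefore $\lambda(\mu x)\in\lambda A$; by axiom $A_3$(ii) we have $\lambda(\mu x)=(\lambda\mu) x=\nu x$, whence $\nu x\in\lambda A$.

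No genuine obstacle is expected: every part reduces to a direct application of the definition, with the only non-trivial input being the use of the associativity axiom $A_3$(ii) in (v). I would simply present the five items in this order, each in one or two lines, with explicit citation of the relevant axiom where a scalar manipulation is invoked.
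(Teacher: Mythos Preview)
Your proposal is correct and mirrors the paper's own treatment essentially line for line: the paper proves all five items in the discussion immediately preceding the proposition, using exactly the same devices you describe (specialising to $\mu=0$ for (i), taking the minimum radius for (ii), the superset observation for (iii), the containments $A\subseteq{\uparrow}A$ and $A\subseteq{\downarrow}A$ for (iv), and the rescaled radius $\alpha|\lambda|$ for (v)). Your version is slightly more explicit in (v), spelling out the substitution $\mu=\nu/\lambda$ and citing axiom $A_3$(ii), but the argument is the same.
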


\begin{Def} Let $X$ be an evs over the field $\K$ of real or complex numbers. A set $A\subseteq X$ is called a \emph{balanced set}\index{Balanced set} if for any $\alpha \in \mathbb{K} \text{ with } | \alpha | \leq 1$, $\alpha A \subseteq A.$
\end{Def}

Clearly, for any balanced set $A(\subseteq X)$ and $x \in X$, 0.$x \in A$ i.e. $\theta \in A$.

Let $\{A_\alpha : \alpha \in \Lambda \}$ be a family of balanced sets in an evs $X$, where $\Lambda$ is an index set and $A:=\underset{\alpha \in \Lambda}{\bigcap} A_\alpha$. Let  $\lambda \in \mathbb{K} \text{ with } | \lambda  | \leq 1$ and $x \in A$. Then $x \in A_\alpha$, $\forall \alpha \in \Lambda$ where each $A_\alpha$ is balanced. So, $\lambda x \in A_\alpha$, $\forall \alpha \in \Lambda$ $\Longrightarrow$ $\lambda x\in A$. Therefore, $\lambda A\subseteq A$. Hence $A$ is a balanced subset of $X$. 

Let $B:= \underset{\alpha \in \Lambda}{\bigcup} A_\alpha$ and $x \in B$. So, $\exists\, \alpha \in \Lambda$ such that $x \in A_\alpha.$ As $A_\alpha$ is balanced, so $\lambda x \in A_\alpha \subseteq B$ for any $\lambda\in\K$ with $| \lambda | \leq 1$. Therefore, $\lambda B\subseteq B$. Hence $B$ is a balanced subset of $X$.

Let $\alpha \in \mathbb{K}$ with $| \alpha | \leq 1$ and $A$ be a balanced subset of $X$. Let $\lambda \in \mathbb{K}$ and $y \in \lambda A.$ Then $y = \lambda x$ for some $x \in A.$ Now $A$ is balanced so $\alpha x \in A.$ Therefore, $\alpha y = \alpha (\lambda x) = \lambda (\alpha x) \in \lambda A.$ Thus, $\alpha (\lambda A) \subseteq \lambda A$ which implies that $\lambda A$ is also a balanced subset of $X$.

Now, let $\alpha \in \mathbb{K}$ with $| \alpha | \leq 1$ and $A$ be a balanced subset of $X$. Let $y \in \uparrow A$ be arbitrary. So, $\exists\, x \in A$ such that $x \leq y \Rightarrow \alpha x \leq \alpha y \Rightarrow \alpha y \in \uparrow A\ (\because \alpha x \in \alpha A \subseteq A)$. Thus $\alpha (\uparrow A) \subseteq \uparrow A$, where $| \alpha | \leq 1.$ Hence $\uparrow A$ is a balanced subset of $X$. Similarly, it can be shown that $\downarrow A$ is also a balanced subset of $X$.

From the above discussion we can conclude the following basic facts:

\begin{Prop}Let $X$ be an evs over the field $\K$ of real or complex numbers. Then, 
\\ (i) $A \subseteq X$ is balanced $\Longrightarrow \theta \in A$ $[$ $\theta$ being the additive identity of $X$ $]$.
\\(ii) Intersection of any collection of balanced sets is a balanced set.
\\(iii) Union of any collection of balanced sets is a balanced set.
\\(iv) $A \subseteq X$ is balanced $\Longrightarrow \uparrow A\text{ and} \downarrow A$
are balanced.
 \\(v) $A \subseteq X$ is balanced and $\lambda \in \mathbb{K} \Longrightarrow \lambda A$ is balanced.

\label{ch4:p:b1}\end{Prop}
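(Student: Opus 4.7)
The plan is to verify each of (i)--(v) directly from the definition of balancedness, invoking only the evs axioms $A_2$, $A_3$(ii) and $A_4$; the arguments are all one-liners and indeed the paragraphs immediately preceding the statement have essentially carried them out.

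For (i) I would pick any $y \in A$ and take the scalar $0$ in the defining inclusion $\alpha A \subseteq A$ (valid since $|0|\leq 1$); axiom $A_4$ then identifies $0\cdot y$ with $\theta$. Parts (ii) and (iii) reduce to the pointwise observation that if each $A_i$ is balanced and $|\lambda|\leq 1$, then $x \in A_i \Rightarrow \lambda x \in A_i$, which translates at once into $\lambda\bigcap_i A_i \subseteq \bigcap_i A_i$ and $\lambda\bigcup_i A_i \subseteq \bigcup_i A_i$.

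For (iv) I would use the order-compatibility axiom $A_2$: if $y \in \uparrow\! A$, choose $x \in A$ with $x \leq y$, so $\alpha x \leq \alpha y$; since $\alpha x \in A$ by balancedness, $\alpha y \in \uparrow\! A$. The case of $\downarrow\! A$ is symmetric. For (v), associativity of scalar multiplication (axiom $A_3$(ii)) gives $\alpha(\lambda A) = \lambda(\alpha A) \subseteq \lambda A$ for any $|\alpha|\leq 1$ and arbitrary scalar $\lambda$, so $\lambda A$ is balanced.

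The only subtlety I would flag is that (i) tacitly requires $A$ to be nonempty; beyond that there is no obstacle here, since each invoked axiom does exactly the work that is needed and no deeper structural property of the evs enters the argument.
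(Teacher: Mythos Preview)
Your proposal is correct and follows essentially the same approach as the paper: the paragraphs preceding the proposition carry out exactly the arguments you outline, using $A_4$ for (i), pointwise verification for (ii)--(iii), axiom $A_2$ for (iv), and commutativity/associativity of scalar multiplication for (v). Your remark that (i) tacitly requires $A\neq\emptyset$ is a valid caveat that the paper leaves implicit.
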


\section{Effect of Local Base at $\theta$ on the Topology of a Topological evs}\markright{Effect of Local Base at $\theta$ on the Topology of a Topological evs}

In this section, we have characterised a local base at `$ \theta $' $[$ $\theta$ being the additive identity $]$ in terms of balanced and absorbing sets in a topological evs $X$ over the field $\K$ of real or complex numbers. Also, we have found a sufficient condition under which an evs satisfying certain properties can be topologised to form a topological evs over $\K$.

\begin{Res} In a topological evs $X$ over the field $\K$ of real or complex numbers, any neighbourhood of  $\theta$ is an absorbing set and contains a balanced neighbourhood of $\theta$ .
\label{ch4:r:1}\end{Res}

\begin{proof} Let $U$ be any neighbourhood of $\theta$ in $X$ and $x \in X$ be arbitrary. Define a function $\psi : \mathbb{K} \rightarrow X$ by: $\psi (t)= t.x$, $\forall\, t \in \mathbb{K}.$ $X$ being a topological evs, the scalar multiplication `$\cdot$' : $\mathbb{K} \times X \rightarrow X$ is continuous and hence $\psi$ is also continuous. Now, $\psi(0)$ = $\theta$ and $U$ is a neighbourhood of $\theta \Longrightarrow \exists$ a closed disc $\overline{B}(0 , \epsilon):=\{ \alpha \in \mathbb{K} : | \alpha |\leq\epsilon \}$ in $\K$ for some $\epsilon>0$ such that $\psi(\overline{B}(0 , \epsilon))$ $\subseteq U$ i.e. $\overline{B}(0 , \epsilon).x \subseteq U$ i.e. $\lambda x \in U$, $\forall \lambda \in \mathbb{K} \text{ with }$ $| \lambda |$ $\leq$ $\epsilon. $ As $x \in X$ is arbitrary, $U$ is an absorbing set.

Again, $0.\theta = \theta$. So $\exists$ a closed disc $\overline{B}(0 , \epsilon)$ in $\K$ for some $\epsilon>0$ and an open neighbourhood $W$ of $\theta$ in $X$ such that $\overline{B}(0 , \epsilon). W\subseteq U$ $\Longrightarrow$ $V := \underset{| \lambda | \leq \epsilon}{\bigcup} \lambda W$ $\subseteq U$. Now $\epsilon W$ is also a neighbourhood of $\theta$ and $\epsilon W \subseteq V$. Hence $V$ is a neighbourhood of $\theta$. Again, let $\alpha \in \mathbb{K}$ with $| \alpha | \leq 1$ and $x \in V$. Then $x = \lambda x_1$ for some  $x_1 \in W$ and $\lambda \in \mathbb{K}$ with $| \lambda | \leq \epsilon.$ So, $\alpha x = \alpha \lambda x_1$ where, $| \alpha \lambda |$ $\leq$ $| \lambda |$ $\leq$ $\epsilon$ and $x_1 \in W$. Therefore $\alpha x \in$ $\underset{| \lambda | \leq \epsilon}{\bigcup} \lambda W$ = $V.$ Thus, $ \alpha V \subseteq V \text{ for } | \alpha | \leq 1.$ Hence $V$ is a balanced neighbourhood of $\theta$ with $V \subseteq U.$ 
\end{proof}

\begin{Res} In a topological evs $X$ over the field $\K$ of real or complex numbers, for each neighbourhood $U$ of $\theta$,  $\exists$ a neighbourhood $W$ of $\theta$  such that $W+W \subseteq U.$
\label{ch4:r:2} \end{Res}

\begin{proof} In $X$, $\theta + \theta$ = $\theta$ and $+ : X \times X$ $\longrightarrow X$ is a continuous map. So, for any neighbourhood $U$ of $\theta$, $\exists$ neighbourhoods $G$ and $H$ of $\theta$ such that $G + H \subseteq U.$ Put, $W = G \cap H$. Then $W + W$ $\subseteq U$ where, $W$ is also a neighbourhood of $\theta.$ 
\end{proof}

\begin{Res} In a topological evs $X$ over the field $\K$ of real or complex numbers, any open set $G$ is of the form: $G$ $=$ $\underset{x \in G}{\bigcup}(x + U_x)$ where, $U_x$ is a balanced neighbourhood of $\theta$ \big(depending on $x$\big) $\forall x \in G$.
\label{ch4:r:3}\end{Res}

\begin{proof} Let $G$ be an open subset of $X$ and $x$ = $x + \theta \in G$. Since $+ : X \times X$ $\longrightarrow$ $X$ is continuous, $\exists$ open neighbourhoods $V_x$ and $W_x$ of $\theta$ and $x$ respectively such that $W_x + V_x$ $\subseteq$ $G \Longrightarrow$ $x + V_x \subseteq G.$ Now, $V_x$ contains a balanced neighbourhood of $\theta$, say $U_x$ [by result \ref{ch4:r:1}]. So, $x \in x + U_x \subseteq G$. Thus  $\underset{x \in G}{\bigcup}\{x\}$ $\subseteq$ $\underset{x \in G}{\bigcup}(x + U_x)$ $\subseteq G$. i.e. $G$ = $\underset{x \in G}{\bigcup}(x + U_x)$ where, each $U_x$ is a balanced neighbourhood of $\theta$. 
\end{proof}

\begin{Res} In a topological evs $X$ over the field $\K$ of real or complex numbers, if $x \nleq y$ then $\exists$ balanced neighbourhoods $U, V$ of $\theta$ such that $\uparrow(x + U)$ $\cap$ $\downarrow(y + V)=\emptyset$, $\forall x,y\in X$.
\label{ch4:r:4} \end{Res}

\begin{proof} Since $X$ is a topological evs, the partial order `$\leq$' on $X$ is closed. So, for any $x, y \in X$ with $x \nleq y$, $\exists$ open neighbourhoods $G_x$ and $G_y$ of $x$ and $y$ respectively such that $\uparrow G_x$  $\cap$  $\downarrow G_y$ = $\emptyset$ [ in view of theorem \ref{ch1:t:closedorder} ]. Now, $x \in G_x$ and $y \in G_y$ $\Longrightarrow \exists$ balanced neighbourhoods $U$ and $V$ of $\theta$ such that $x + U \subseteq G_x$ and $y + V\subseteq G_y$ [by result \ref{ch4:r:3}]. Clearly, $\uparrow(x + U)$ $\subseteq$ $\uparrow G_x$ and $\downarrow(y + V)$ $\subseteq$ $\downarrow G_y$. Hence $\uparrow(x + U)$  $\cap\downarrow(y + V)=\emptyset$.
\end{proof}

\begin{Res} Let $X$ be a topological evs over the field $\K$ of real or complex numbers and $G$ be an open subset of $X$. Then for any $x \in G$ and $\alpha \in \K\smallsetminus\{0\}$, $\exists$ a balanced neighbourhood $U_x$ of $\theta$ and $\epsilon>0$ such that $B(\alpha,\epsilon).(x+U_x)$ $\subseteq \alpha G$ where, $B(\alpha,\epsilon):=\{\lambda\in\K:|\alpha-\lambda|<\epsilon\}$.
\label{ch4:r:5}\end{Res}

\begin{proof} Let $G$ be an open subset of the topological evs $X$ and $x \in G$. Now, for any fixed $\lambda \in \K\smallsetminus\{0\}$, the map $M_\lambda:$ $X \longrightarrow X$ defined by : $M_\lambda(y)=\lambda y, \forall y\in X,$ is a homeomorphism. So, for any $\alpha \in \K\smallsetminus\{0\},\ \alpha G$ is an open set containing    $\alpha x.$ Since the scalar multiplication $\cdot: \mathbb{K} \times X \rightarrow X$ is continuous, $\exists\, \epsilon>0$ and an open set $U$ containing $x$ such that $B(\alpha,\epsilon).U \subseteq \alpha G$ where, $B(\alpha,\epsilon):=\{\lambda\in\K:|\alpha-\lambda|<\epsilon\}$. Again, $U$ being an open set containing $x$, $\exists$ a balanced neighbourhood $U_x $ of $\theta$ such that $x \in x+U_x \subseteq U$ [by result \ref{ch4:r:3}].  Hence, $B(\alpha,\epsilon).(x+U_x)$ $\subseteq$ $\alpha G.$
\end{proof}

\begin{Th} In a topological evs $X$ over the field $\mathbb{K}$ of real or complex numbers, there exists a local base $\mathscr{U}$ at $\theta$ such that the following conditions hold:

(i) $U \in \mathscr{U} \Longrightarrow U$ is balanced and absorbing.

(ii) $U \in \mathscr{U} \Longrightarrow \exists\, V \in \mathscr{U}$ such that $V + V \subseteq U.$

(iii) $U_1, U_2 \in \mathscr{U} \Longrightarrow \exists\, W \in \mathscr{U} \text{ such that } W \subseteq U_1 \cap U_2.$

(iv) For any $x, y \in X$ with $x \nleq y$, $\exists$ $U, V \in \mathscr{U}$ such that  $\uparrow(x + U)$ $\cap$ $\downarrow(y + V)$ = $\emptyset$.

(v) For any open set $G$ containing $x$ and $\alpha \in \K\smallsetminus\{0\}$, $\exists\, U_x \in \mathscr{U}$ and $\epsilon>0$ such that $B(\alpha,\epsilon).(x+U_x)$ $\subseteq$ $\alpha G$ where, $B(\alpha,\epsilon):=\{\lambda\in\K:|\alpha-\lambda|<\epsilon\}$.
\end{Th}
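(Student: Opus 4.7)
The proof is essentially an exercise in bookkeeping, assembling Results \ref{ch4:r:1}--\ref{ch4:r:5} into a single coherent statement. My plan is to define
\[
\mathscr{U} := \{U \subseteq X : U \text{ is an open balanced neighbourhood of } \theta\},
\]
show this is a local base at $\theta$, and then read off the five properties from the earlier results.

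First I would justify that $\mathscr{U}$ is a local base. Given any neighbourhood $N$ of $\theta$, Result \ref{ch4:r:1} supplies a balanced neighbourhood $V \subseteq N$. A look at the construction shows $V = \bigcup_{|\lambda|\leq\epsilon} \lambda W$ for some open neighbourhood $W$ of $\theta$; since $\theta \in W$, the point $\theta$ lies in $\lambda W$ for every $\lambda \neq 0$, and since each $\lambda W$ with $\lambda \neq 0$ is open (the scalar multiplication map $M_\lambda$ being a homeomorphism for $\lambda \neq 0$), we may rewrite $V = \bigcup_{0<|\lambda|\leq\epsilon}\lambda W$, an open set. Thus $V \in \mathscr{U}$ and $\mathscr{U}$ is indeed a local base at $\theta$.

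Now the five conditions follow directly. (i) Members of $\mathscr{U}$ are balanced by definition, and absorbing by Result \ref{ch4:r:1}. (ii) Given $U \in \mathscr{U}$, Result \ref{ch4:r:2} furnishes a neighbourhood $W$ of $\theta$ with $W + W \subseteq U$, and Result \ref{ch4:r:1} then produces a balanced open neighbourhood $V \subseteq W$ of $\theta$, so $V \in \mathscr{U}$ and $V + V \subseteq U$. (iii) Given $U_1, U_2 \in \mathscr{U}$, their intersection is an open neighbourhood of $\theta$, so applying the local-base property yields $W \in \mathscr{U}$ with $W \subseteq U_1 \cap U_2$. (iv) Is Result \ref{ch4:r:4} verbatim, since the balanced neighbourhoods produced there are members of $\mathscr{U}$ (or contain members of $\mathscr{U}$, which suffices because $\uparrow$ and $\downarrow$ respect inclusion). (v) Is Result \ref{ch4:r:5}, whose conclusion already uses a balanced neighbourhood $U_x$ of $\theta$.

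The whole proof is routine given the preceding results; the only mildly subtle point is the verification that the balanced neighbourhoods constructed in Result \ref{ch4:r:1} are actually open, so that $\mathscr{U}$ can be taken to consist of open sets. If we instead allowed $\mathscr{U}$ to contain arbitrary (not necessarily open) balanced neighbourhoods of $\theta$, this subtlety vanishes entirely and the proof is a direct citation of Results \ref{ch4:r:1}--\ref{ch4:r:5}.
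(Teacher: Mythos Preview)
Your proposal is correct and follows essentially the same approach as the paper: take $\mathscr{U}$ to be the balanced neighbourhoods of $\theta$ and read off (i)--(v) from Results \ref{ch4:r:1}--\ref{ch4:r:5}. The paper simply takes \emph{all} balanced neighbourhoods (not just the open ones), which avoids your openness verification and lets (iii) follow by noting $U_1\cap U_2\in\mathscr{U}$ directly via Proposition \ref{ch4:p:b1}(ii); you anticipated exactly this simplification in your final paragraph.
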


\begin{proof} Let $\mathscr{U}$ be the set of all balanced neighbourhoods of $\theta$. Then, (i), (ii), (iv) and (v) follows from results \ref{ch4:r:1}, \ref{ch4:r:2}, \ref{ch4:r:4} and \ref{ch4:r:5} respectively. Also, (iii) follows as $U_1, U_2 \in \mathscr{U} \Longrightarrow U_1 \cap U_2\in \mathscr{U}$ [ in view of proposition \ref{ch4:p:b1} (ii) ].
\end{proof}

The next theorem gives a sufficient condition for an evs to be topological.

\begin{Th} Let X be an evs over the field $\mathbb{K}$ of real or complex numbers such that $\exists$ a non-void family $\mathscr{U}$ of subsets of $X$ satisfying the following conditions:

(i) $U \in \mathscr{U}$ $\Longrightarrow$ $U$ is balanced and absorbing.

(ii) $U \in \mathscr{U}$ $\Longrightarrow$ $\exists\, V \in \mathscr{U}$ such that $V + V \subseteq U.$

(iii) $U_1, U_2 \in \mathscr{U}$ $\Longrightarrow$ $\exists$ $W \in \mathscr{U}$ such that $W \subseteq U_1 \cap U_2.$

(iv) For any $x, y \in X$ with $x \nleq y$, $\exists$ $U, V \in \mathscr{U}$ such that  $\uparrow(x + U)$ $\cap$ $\downarrow(y + V)$ = $\emptyset$.

(v) For any $W \in \mathscr{U}$, $x \in X$ and $\alpha \in \K$, $\exists$ $\epsilon$ > 0 and $U \in \mathscr{U}$ such that $B(\alpha, \epsilon)\cdot (x + U)$ $\subseteq$ $\alpha x + W$ where, $B(\alpha, \epsilon):=\{\lambda\in\K:|\alpha-\lambda|<\epsilon\}$ is an open disc in $\mathbb{K}$ having center at $\alpha$ and radius $\epsilon.$

Then, $\exists$ a unique topology on $X$ depending upon $\mathscr{U}$ with respect to which $X$ becomes a topological evs. Moreover, $\mathscr{U}$ forms a local base at $\theta$ in that topology.

\label{ch4:t:sc}\end{Th}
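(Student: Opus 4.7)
The plan is to imitate the classical construction of the TVS topology from a neighbourhood base at the origin. Define
\[\tau := \bigl\{G \subseteq X : \forall\, x \in G,\ \exists\, U \in \mathscr{U}\text{ with }x + U \subseteq G\bigr\}.\]
The bulk of the proof will consist of verifying that $\tau$ is a topology on $X$, that $\mathscr{U}$ is a local base at $\theta$ in $\tau$, that addition and scalar multiplication are continuous and $\leq$ is closed, and finally that $\tau$ is uniquely determined among topologies with the stated properties.

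The topology check is routine: arbitrary unions are immediate from the definition, and (iii) handles binary intersections. For the local-base claim, I would introduce the candidate interior
\[U^{\circ} := \{y \in X : \exists\, V \in \mathscr{U}\text{ with } y + V \subseteq U\}\]
and show it is $\tau$-open, sits inside $U$, and contains $\theta$. Openness uses (ii): for $y \in U^\circ$ with $y + V \subseteq U$, pick $V' \in \mathscr{U}$ with $V' + V' \subseteq V$; then for any $z \in y + V'$ one gets $z + V' \subseteq y + V' + V' \subseteq y + V \subseteq U$, so $z \in U^\circ$. Membership of $\theta$ follows because, $V$ being balanced, $\theta \in V$, hence $V \subseteq V + V \subseteq U$. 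The same interior construction, applied to a translate, shows that for every $x \in X$ and $U \in \mathscr{U}$ the set $x + U$ is a $\tau$-neighbourhood of $x$, and this is what makes the continuity arguments below work.

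Continuity of addition at $(x_0, y_0)$ follows by pulling back a $\tau$-open $G \ni x_0 + y_0$ to $x_0 + y_0 + U \subseteq G$, using (ii) to halve $U$ into $V + V \subseteq U$, and taking the product neighbourhood $(x_0 + V) \times (y_0 + V)$, whose image under $+$ lies in $x_0 + y_0 + V + V \subseteq G$. Continuity of scalar multiplication at $(\alpha_0, x_0)$ is exactly what condition (v) delivers after choosing $W$ with $\alpha_0 x_0 + W \subseteq G$: the set $B(\alpha_0,\epsilon) \times (x_0 + U)$ is the required neighbourhood of $(\alpha_0, x_0)$. Closedness of $\leq$ comes straight from (iv) combined with the characterization Theorem \ref{ch1:t:closedorder}, after shrinking $x + U$ and $y + V$ to their $\tau$-interiors.

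For uniqueness, any other topology $\tau'$ with the stated properties must satisfy $\tau' \subseteq \tau$ by continuity of the translation $y \mapsto x + y$ in $\tau'$ (pull back a $\tau'$-open $G \ni x$ along $T_x$, and use that $\mathscr{U}$ is a $\tau'$-local base at $\theta$); the reverse inclusion uses that $x + U^\circ$ is also a $\tau'$-neighbourhood of $x$ via the same interior construction applied inside $\tau'$. The main obstacle I anticipate is precisely this uniqueness half: in an evs, translation by a non-primitive element is continuous in any topological evs but is generally \emph{not} a homeomorphism (non-primitive elements lack additive inverses), so the topological-group slogan that ``translations are homeomorphisms'' is unavailable, and one has to lean on condition (v) (essentially continuity of scalar multiplication at $\alpha_0 = 1$) to transport balanced neighbourhoods of $\theta$ to bona fide $\tau'$-open neighbourhoods of $x$. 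Apart from that, the verification is mechanical, since conditions (i)--(v) have been tailored to mirror the five hypotheses of the classical TVS construction.
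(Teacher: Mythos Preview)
Your approach is essentially the paper's: both construct the topology from the translated system $\mathscr{U}_x:=\{x+U:U\in\mathscr{U}\}$ and then check continuity of $+$ via (ii), continuity of scalar multiplication via (v), and closedness of $\leq$ via (iv) together with Theorem~\ref{ch1:t:closedorder}. The only presentational difference is that the paper verifies the standard neighbourhood-filter-base axioms for $\{\mathscr{U}_x\}_{x\in X}$ and then quotes the general-topology theorem that such a system determines a \emph{unique} topology with $\mathscr{U}_x$ as local base at each $x$, whereas you write down $\tau$ explicitly and build the interiors $U^\circ$ by hand. Both routes are equivalent and equally short.

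Your extended worry about uniqueness is unnecessary here. The paper reads ``unique topology depending upon $\mathscr{U}$'' as ``unique topology for which $\mathscr{U}_x$ is a local base at $x$ for every $x$'', and that uniqueness is already contained in the neighbourhood-base theorem---no appeal to translations being homeomorphisms (which, as you correctly note, they are not in a general evs) is needed. So you can drop the final paragraph about leaning on condition~(v) at $\alpha_0=1$; the issue simply does not arise under the paper's reading of the statement.
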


\begin{proof} Let $X$ be an evs over the field $\mathbb{K}$ in which the conditions (i) - (v) stated above are satisfied by a non-void family $\mathscr{U}$ of subsets of $X$. For each $x \in X,$ let us define $\mathscr{U}_x := \{x + U : U \in \mathscr{U}\}.$ 

Since $\mathscr{U} \neq \emptyset$, so $\mathscr{U}_x \neq \emptyset$ for any $x \in X.$ 

Since each $U \in \mathscr{U}$ is absorbing, so $\theta \in U,  \forall\ U \in \mathscr{U}$ $\Longrightarrow$ $x \in x + U$, $\forall( x + U)\in \mathscr{U}_x.$ 

Let $V_1, V_2 \in \mathscr{U}_x$. So, $V_1$ = $x+U_1$, $V_2$ = $x+U_2,$ where $U_1, U_2 \in \mathscr{U}$. Now, by (iii), $\exists\, V \in \mathscr{U}$ such that $V \subseteq U_1 \cap U_2.$ So, $x+V \subseteq x+(U_1 \cap U_2)\subseteq V_1 \cap V_2 $ where, $x+V \in \mathscr{U}_x.$

Let $V_x$ = $x+V \in \mathscr{U}_x$. Since $V \in \mathscr{U}$ so by (ii), $ \exists V_1 \in \mathscr{U}$ such that $V_1+V_1 \subseteq V.$ Clearly, $V_1 \subseteq V$ as $\theta \in V_1$. Therefore $x+V_1 \subseteq x+V.$ Put $W_x$ = $x+V_1.$ Then $W_x \in \mathscr{U}_x$ so that $x \in W_x \subseteq V_x.$ Now, for any $y \in W_x, V_y$ = $y+V_1 \in \mathscr{U}_y$ such that for $z \in V_y, z$ = $y+t$ for some $t \in V_1.$ But, $y \in W_x$ $\Longrightarrow$ $y$ = $x+t_1$ for some $t_1 \in V_1.$ Thus, $z$ = $x+(t_1+t)$ $\in x+V_1+V_1$ $\subseteq$ $x+V=V_x$. Therefore $y \in V_y$  $\subseteq V_x$ where, $y \in W_x$ is arbitrary and $V_y \in \mathscr{U}_y.$ 

From above discussion it follows that $\exists$ a unique topology $\tau$ on $X$ with respect to which $\mathscr{U}_x$ is a local base at $x,  \forall x \in X.$ Taking $x=\theta$ we get, $\mathscr{U}$ is a local base at $\theta.$

We now prove that $(X, \tau)$ is a topological evs over $\K$.

Let $f_1:X\times X \rightarrow X$ be defined by: $f_1(x, y)=x+y$, $\forall x,y\in X.$ Let $x, y \in X$ and $G$ be any neighbourhood of $x+y$. Since $\mathscr{U}_{x+y}$ is a local base at $x+y, \exists$ $V \in \mathscr{U}$ such that $(x+y)+V \subseteq G.$ By (ii), $\exists$ $V_1 \in \mathscr{U}$ such that $V_1+V_1$ $\subseteq V$. Therefore $x+y+V_1+V_1$ $\subseteq x+y+V \subseteq G$ $\Longrightarrow$ $(x+V_1)+(y+V_1)$ $\subseteq$ $G$ where, $x+V_1 \in \mathscr{U}_x$, $y+V_1 \in \mathscr{U}_y$ i.e. $f_1((x+V_1)\times(y+V_1))$ $\subseteq G.$ Since $x, y \in X$ are arbitrary, $f_1$ is continuous on $X\times X.$

Let $f_2:\mathbb{K}\times X\rightarrow X$ be defined by: $f_2(\alpha,x)$ = $\alpha.x$, $\forall \alpha \in \mathbb{K}$, $\forall x \in X$. Let $\lambda \in \mathbb{K}$, $y \in X$ be arbitrary and $G$ be an open set containing $\lambda y.$ Then, $\exists$ $W \in \mathscr{U}$ such that $\lambda y+W \subseteq G$ [ $\because$ $\mathscr{U}_{\lambda y}$ is a local base at $\lambda y$ ]. Now, $W \in \mathscr{U}$, $\lambda \in \mathbb{K}$ and $y \in X$ so by (v), $\exists$ $\epsilon$ > 0 and $U \in \mathscr{U}$ such that $f_2(B(\lambda,\epsilon)\times(y+U))$ $\subseteq$ $\lambda y+W \subseteq G$ where, $B(\lambda,\epsilon)$ is a neighbourhood of $\lambda$ and $y+U$ is a neighbourhood of $y$. Hence $f_2$ is continuous.

Let $x, y \in X$ with $x \nleq y$. Then by (iv), $\exists U_x, U_y \in \mathscr{U}$ such that $\uparrow(x + U_x)$ $\cap$ $\downarrow(y + U_y)$ = $\emptyset$. Now, $x+U_x, y+U_y$ are neighbourhoods of $x$ and $y$ respectively. So, $\exists$ open sets $G_x$ and $G_y$ containing $x$ and $y$ respectively such that $x \in G_x \subseteq x+U_x$ and $y \in G_y \subseteq y+U_y.$ Clearly, $\uparrow G_x$ $\cap$ $\downarrow G_y$ = $\emptyset$. So, the partial order `$\leq$' is closed [By theorem \ref{ch1:t:closedorder}].

Hence $(X,\tau)$ is a topological evs over the field $\K$.
\end{proof}

\begin{Lem} Let $X$ and $Y$ be order-isomorphic evs over the field $\K$ with $\phi:X \rightarrow Y$ as an order-isomorphism between them. Then, $\mathscr{U}_Y=\{\phi(U):U \in\mathscr{U}_X\}$ where, $\mathscr{U}_X$ and $\mathscr{U}_Y$ denote the collection of all absorbing sets in the evs $X$ and $Y$ respectively.
\label{ch4:l:1}\end{Lem}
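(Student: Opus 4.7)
The plan is to prove the set equality by establishing both inclusions, relying only on the fact that $\phi$ is a bijective semigroup homomorphism which commutes with scalar multiplication; the order-preserving conditions (iii) and (iv) of Definition \ref{mor} will not actually be needed, which is a pleasant observation worth flagging in the proof.

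For the inclusion $\{\phi(U):U\in\mathscr{U}_X\}\subseteq\mathscr{U}_Y$, I would fix an absorbing set $U\subseteq X$ and an arbitrary $y\in Y$. By surjectivity of $\phi$, choose $x\in X$ with $\phi(x)=y$. Since $U$ is absorbing, there exists $\alpha>0$ such that $\mu x\in U$ whenever $|\mu|\le\alpha$. Applying $\phi$ and using the homogeneity axiom $\phi(\mu x)=\mu\phi(x)=\mu y$ then yields $\mu y\in\phi(U)$ for all such $\mu$, proving $\phi(U)$ is absorbing in $Y$.

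For the reverse inclusion $\mathscr{U}_Y\subseteq\{\phi(U):U\in\mathscr{U}_X\}$, I would take an arbitrary absorbing set $V\subseteq Y$ and set $U:=\phi^{-1}(V)$. Bijectivity of $\phi$ immediately gives $\phi(U)=V$, so it only remains to verify that $U$ is absorbing in $X$. Given any $x\in X$, absorption of $V$ at the point $\phi(x)\in Y$ supplies some $\alpha>0$ such that $\mu\phi(x)\in V$ for all $|\mu|\le\alpha$; rewriting $\mu\phi(x)=\phi(\mu x)$ and pulling back through $\phi^{-1}$ shows $\mu x\in U$ for all such $\mu$, so $U\in\mathscr{U}_X$ and $V\in\{\phi(U):U\in\mathscr{U}_X\}$.

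There is no real obstacle here. The only minor thing to be careful about is not to conflate the absorbing-set condition (which ranges over scalars $\mu$) with the structural property of $\phi$; the proof hinges on the identity $\phi(\mu x)=\mu\phi(x)$ being used in both directions. Since absorbing-ness is a purely algebraic condition (definition \ref{ch4:d:abs}) and $\phi$ is a bijective $K$-homogeneous additive map, the statement is essentially automatic, and the lemma could in fact be strengthened to say the same for any bijective map satisfying (i) and (ii) of Definition \ref{mor}.
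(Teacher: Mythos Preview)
Your proof is correct and follows essentially the same approach as the paper's: both establish the two inclusions by using surjectivity to write $y=\phi(x)$, then transferring the absorbing condition via the homogeneity identity $\phi(\mu x)=\mu\phi(x)$, and symmetrically pulling back through $\phi^{-1}$ for the reverse inclusion. Your additional remark that only properties (i) and (ii) of Definition~\ref{mor} (together with bijectivity) are used is accurate and a worthwhile observation that the paper's proof implicitly confirms but does not state.
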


\begin{proof}Let $W \in \mathscr{U}_X$ and $y \in Y$ be arbitrary. Then $y=\phi(x)$ for some $x\in X.$ Now, $W$ is absorbing so $\exists$ $\epsilon$ > 0 such that $\mu x \in W$, $\forall \mu \in \mathbb{K}$ with $| \mu |$ $\leq$ $\epsilon$ $\Longrightarrow$ $\phi(\mu x) \in \phi(W)$, $\forall \mu \in \mathbb{K}$ with $| \mu |$ $\leq$ $\epsilon\Rightarrow\mu \phi(x) \in \phi(W)$, $\forall \mu \in \mathbb{K}$ with $| \mu |$ $\leq$ $\epsilon\Rightarrow\mu y \in \phi(W)$, $\forall \mu \in \mathbb{K}$ with $| \mu |$ $\leq$ $\epsilon$. $\therefore$ $\phi(W)$ is an absorbing set $\Longrightarrow$ $\phi(W)\in \mathscr{U}_Y.$

Now, let $V \in \mathscr{U}_Y$, $x \in X$ be arbitrary and $U$ = $\phi^{-1}(V)$. Since $V$ is absorbing, so $\exists$ $\epsilon$ > 0 such that $\mu \phi(x) \in V$, $\forall \mu \in \mathbb{K}$ with $| \mu |$ $\leq$ $\epsilon \Longrightarrow$ $\phi(\mu x) \in V$, $\forall \mu \in \mathbb{K}$ with $| \mu |$ $\leq$ $\epsilon \Longrightarrow$ $\mu x \in \phi^{-1}(V)$ = $U$, $\forall \mu \in \mathbb{K}$ with $| \mu |$ $\leq$ $\epsilon$. Since $x \in X$ is arbitrary so $U \in \mathscr{U}_X$ $\Longrightarrow$ $V \in \{\phi(U):U \in\mathscr{U}_X\}.$ Hence $\mathscr{U}_Y$ = $\{\phi(U):U \in\mathscr{U}_X\}$.
 \end{proof}

\begin{Th} Let $X$ and $Y$ be two order-isomorphic evs over the field $\K$. Then, $X$ has a non-void family $\mathscr{U}_X$ of subsets of $X$ satisfying conditions (i)-(v) of theorem \ref{ch4:t:sc} iff $Y$ has a non-void family $\mathscr{U}_Y$ of subsets of $Y$ satisfying the same. Hence, the property of an evs to have a non-void family of subsets satisfying above mentioned conditions, is an evs property. 
\end{Th}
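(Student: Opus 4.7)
The plan is to use the fact that an order-isomorphism $\phi : X \to Y$ respects every piece of structure involved in conditions (i)--(v) of Theorem \ref{ch4:t:sc}, and hence to transport the family $\mathscr{U}_X$ to $Y$ by setting $\mathscr{U}_Y := \{\phi(U) : U \in \mathscr{U}_X\}$. Because $\phi^{-1}$ is again an order-isomorphism, it suffices to verify the forward implication; the backward direction will then follow by symmetry, and the final evs-property assertion is an immediate consequence.

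First I would record the commutation rules needed throughout. For any $A, B \subseteq X$ and $\alpha \in \K$, additivity and homogeneity of $\phi$ give $\phi(A+B)=\phi(A)+\phi(B)$ and $\phi(\alpha A)=\alpha \phi(A)$; bijectivity gives $\phi(A \cap B) = \phi(A) \cap \phi(B)$; and the order-morphism axioms (specifically, condition (iv) of Definition \ref{mor} applied to the singleton fibers of a bijection) yield the order-reflection $\phi(x)\leq \phi(y) \Rightarrow x \leq y$, which together with monotonicity of $\phi$ gives $\phi(\uparrow A)=\uparrow \phi(A)$ and $\phi(\downarrow A)=\downarrow \phi(A)$. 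With these in hand, I would verify the five conditions in order. Condition (i) splits: the ``absorbing'' part is Lemma \ref{ch4:l:1}, while the ``balanced'' part follows from $\alpha \phi(U) = \phi(\alpha U) \subseteq \phi(U)$ whenever $|\alpha|\leq 1$. Conditions (ii) and (iii) are obtained by applying $\phi$ to the corresponding inclusions in $X$ and invoking the commutation with $+$ and $\cap$. For (iv), given $y_1,y_2 \in Y$ with $y_1 \not\leq y_2$, write $y_i = \phi(x_i)$; order-reflection gives $x_1 \not\leq x_2$, so (iv) in $X$ produces $U,V \in \mathscr{U}_X$ with $\uparrow(x_1+U)\cap \downarrow(x_2+V)=\emptyset$, and applying $\phi$ transports this to the required disjointness in $Y$. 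For (v), the identity $\phi\bigl(B(\alpha,\epsilon)\cdot(x+U)\bigr) = B(\alpha,\epsilon)\cdot(\phi(x)+\phi(U))$ reduces it directly to (v) in $X$.

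The main obstacle, though mild, is the order-reflection property $\phi(x) \leq \phi(y) \Rightarrow x \leq y$ together with the resulting identities $\phi(\uparrow A) = \uparrow \phi(A)$ and $\phi(\downarrow A) = \downarrow \phi(A)$; once these are established, every other verification is essentially a one-line computation that moves $\phi$ inside the relevant set-theoretic or algebraic expression. The evs-property conclusion then follows at once, since the construction $\mathscr{U}_X \mapsto \mathscr{U}_Y$ and its inverse $\mathscr{U}_Y \mapsto \phi^{-1}(\mathscr{U}_Y)$ work symmetrically.
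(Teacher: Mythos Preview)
Your proposal is correct and follows essentially the same route as the paper: define $\mathscr{U}_Y := \{\phi(U) : U \in \mathscr{U}_X\}$, invoke Lemma~\ref{ch4:l:1} for the absorbing part of (i), and then push $\phi$ through the algebraic and order-theoretic operations to verify (i)--(v), with the converse handled by symmetry via $\phi^{-1}$. The only cosmetic difference is that you front-load the commutation identities (for $+$, scalar multiplication, $\cap$, $\uparrow$, $\downarrow$) before checking the conditions, whereas the paper establishes them inline as needed.
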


\begin{proof} Let $\phi : X \rightarrow Y$ be an order-isomorphism and $\mathscr{U}_X$ be a family of non-void subsets of $X$ satisfying conditions (i)-(v) of theorem \ref{ch4:t:sc}. Let, $\mathscr{U}_Y := \{\phi(U):U \in \mathscr{U}_X\}.$ Since $\mathscr{U}_X \neq \emptyset$, so $\mathscr{U}_Y \neq \emptyset$.

Let $W \in \mathscr{U}_X$. So, $W$ is an absorbing and balanced set. Therefore, by lemma \ref{ch4:l:1}, $\phi(W)$ is an absorbing subset of $Y$. For any $\lambda \in \mathbb{K}$ with $|\lambda|\leq1$, $\lambda W$ $\subseteq$ $W$ [ $\because$ $W$ is a balanced set ] $\Longrightarrow$ $\phi(\lambda W)$ $\subseteq$ $\phi(W)\Longrightarrow\lambda\phi(W)$ $\subseteq$ $\phi(W)$. Therefore, $\phi(W)$ is a balanced subset of $Y$.

So, $\mathscr{U}_Y$ satisfies condition (i).

Again, $W \in \mathscr{U}_X$ $\Longrightarrow$ $\exists$ $V \in \mathscr{U}_X$ such that $V+V$ $\subseteq$ $W$. Therefore $\phi(V+V)$ $\subseteq$ $\phi(W)$  $\Rightarrow\phi(V)+\phi(V)$ $\subseteq$ $\phi(W)$ $\big[ \because \phi(V+V)$ = $\phi(V)+\phi(V) \big]$ where, $\phi(V) \in \mathscr{U}_Y.$ So, $\mathscr{U}_Y$ satisfies condition (ii).

Let $\phi(U_1)$, $\phi(U_2) \in\mathscr{U}_Y$ for some $U_1, U_2 \in \mathscr{U}_X$. Therefore  $\exists$ $W \in \mathscr{U}_X$ such that $W \subseteq (U_1 \cap U_2)$ $\Longrightarrow$ $\phi(W)$ $\subseteq$ $\phi(U_1)$ $\cap$ $\phi(U_2).$ So, $\mathscr{U}_Y$ satisfies condition (iii).

Since, $\phi$ is an order-isomorphism, so $x \leq y$ in $X$ iff $\phi(x)$ $\leq$ $\phi(y)$ in $Y$. For any $x \in X$, $U \in \mathscr{U}_X,$ we have, $z \in \downarrow$ $(x+U)$ iff $z \leq x+u$ for some $u \in U$ iff $\phi(z)$ $\leq$ $\phi(x+u)$ = $\phi(x)+\phi(u)$ iff $\phi(z) \in$ $\downarrow(\phi(x)+\phi(U))$. Thus $ \phi(\downarrow(x+U))$ = $\downarrow(\phi(x)+\phi(U)).$ Similarly, $\phi(\uparrow(x+U))$ = $\uparrow(\phi(x)+\phi(U)).$ Let $\phi(x_1)$ = $y_1$ and $\phi(x_2)$ = $y_2 \in Y$ with $y_1 \nleq y_2$. Then $ x_1 \nleq x_2$ $\Longrightarrow$ $\exists$ $U_1, U_2 \in \mathscr{U}_X$ such that $\uparrow (x_1 + U_1)$ $\cap$ $\downarrow (x_2 +U_2)=\emptyset$ $\Longrightarrow$ $\phi(\uparrow (x_1 + U_1)\cap\downarrow (x_2 +U_2))=\emptyset$ $\Longrightarrow$ $\phi (\uparrow (x_1 + U_1) )$ $\cap$ $\phi(\downarrow (x_2 +U_2))=\emptyset$ $\Longrightarrow$ $\uparrow (\phi(x_1) + \phi(U_1) )\cap\downarrow (\phi(x_2) +\phi(U_2))$ = $\emptyset$ i.e. $\uparrow (y_1 + \phi(U_1) )$ $\cap$ $\downarrow (y_2 +\phi(U_2))$ = $\emptyset$ where $\phi(U_1)$ and $\phi(U_2) \in \mathscr{U}_Y.$ Hence, $\mathscr{U}_Y$ satisfies condition (iv).

Let $ \phi(W) \in \mathscr{U}_Y$ for some $W \in \mathscr{U}_X, y$ = $\phi(x) \in Y$ for some $x\in X$ and $\alpha \in \mathbb{K}.$ Since, $\mathscr{U}_X$ satisfies condition (v) of theorem \ref{ch4:t:sc} so $\exists$ $\epsilon$ > 0 and $U_x \in \mathscr{U}_X$ such that $B(\alpha, \epsilon)\cdot (x + U_x)$ $\subseteq$ $\alpha x + W.$ Now, $\phi$ is an order-isomorphism so, $\phi(B(\alpha, \epsilon)\cdot (x + U_x))$ = $B(\alpha, \epsilon)\cdot \phi(x + U_x)$ = $B(\alpha, \epsilon)\cdot (\phi(x) + \phi(U_x))$ = $B(\alpha, \epsilon)\cdot(y + \phi(U_x))$. Again, $\phi(B(\alpha, \epsilon)\cdot (x + U_x))$ $\subseteq$ $\phi(\alpha x +W)$ = $\phi(\alpha x)+\phi(W)$ = $\alpha\phi(x)+\phi(W)$ = $\alpha y +\phi(W).$

Thus, for $\phi(W) \in \mathscr{U}_Y, y \in Y$ and $\alpha \in \mathbb{K}$, $\exists$ $\epsilon>0$ and $\phi(U_x) \in \mathscr{U}_Y$ such that\\ $B(\alpha, \epsilon)\cdot (y + \phi(U_x))$ $\subseteq$ $\alpha y +\phi(W)$. Hence $\mathscr{U}_Y$ satisfies condition (v).

 Similarly, if $\mathscr{U}_Y$ be a family of non-void subsets of $Y$ satisfying conditions (i)-(v) of theorem \ref{ch4:t:sc} then taking $\mathscr{U}_X := \{\phi^{-1}(U):U \in \mathscr{U}_Y\}$ we can prove that $\mathscr{U}_X$ satisfies conditions (i)-(v) of theorem \ref{ch4:t:sc} in $X$.
\end{proof}

\section{Bounded Sets in a Topological evs }\markright{Bounded Sets in a Topological evs}

In this section we have introduced the concept of bounded sets in a topological evs over the field $\K$ and characterised them with the help of balanced sets.

\begin{Def} Let $X$ be a topological evs over the field $\K$ of real or complex numbers. A set $A(\subseteq X$) is said to be \textit{bounded}\index{Bounded set} if for any neighbourhood $V$ of $\theta$, $\exists$ a real number $\alpha$ > 0 such that $\mu A \subseteq V$, $\forall \mu \in \mathbb{K}$ with $|\mu|$ $\leq$ $\alpha.$
\label{ch4:d:bdd}\end{Def}  

\begin{Res} A set $B$ in a topological evs $X$ over the field $\K$ is bounded iff for any balanced neighbourhood $U$ of $\theta$, $\exists$ $\alpha$ > 0 such that $B\subseteq\alpha U.$
\label{ch4:r:8}\end{Res}

\begin{proof} Let $B$ be a bounded set in $X$ and $U$ be a balanced neighbourhood of $\theta$. So, $\exists$ $\beta$ > 0 such that $\mu B \subseteq U$, $\forall \mu \in \mathbb{K}$ with $|\mu|$ $\leq$ $\beta$. In particular, $\beta B$ $\subseteq$ $U$ $ \Rightarrow $ $B\subseteq \frac{1}{\beta}U$ $ \Rightarrow $ $B\subseteq\alpha U$ where, $\alpha = \frac{1}{\beta}>0.$ Conversely, let the given condition hold and $U$ be any neighbourhood of $\theta$. Then by result \ref{ch4:r:1}, $\exists$ a balanced neighbourhood $V$ of $\theta$ such that $V\subseteq U$. Also, $\exists$ $\alpha>0$ such that $B\subseteq\alpha V$. Let $\lambda=\frac{1}{\alpha}>0$. Then for $\mu\in\K$ with $|\mu|\leq\lambda$ we have, $|\mu\alpha|\leq1$ . So $\mu\alpha V \subseteq V\ (\because V$ is a balanced set). Thus for $|\mu|\leq\lambda$ we have, $\mu B\subseteq \mu\alpha V\subseteq V \subseteq U.$ Hence $B$ is a bounded set.
\end{proof}

\begin{Not}
	In view of result \ref{ch4:r:8} we can state:
	 `In a topological evs $X$ over the field $\K$, a subset of a bounded set is bounded'.
	\label{ch4:n:8}\end{Not}

\begin{Res} A subset $A$ of a topological evs $X$ over the field $\K$ is bounded iff for any sequence $\{x_n\}$  in $A$ and a sequence $\{\lambda_n\}$ in $\mathbb{K}$ with $\lambda_n\rightarrow 0$ we have, $\lambda_nx_n \rightarrow \theta.$
\label{ch4:r:nscb}\end{Res}

\begin{proof} Let $A$ be a bounded subset of $X$ and $U$ be any neighbourhood of $\theta$. Then $\exists$ $\alpha>0$ such that $\mu A\subseteq U$, $\forall \mu\in\K$ with $|\mu|$ $\leq$ $\alpha$ $\cdots\cdots$ (1). Let $\{x_n\}$ be a sequence in $A$ and $\{\lambda_n\}$ be  a sequence in $\mathbb{K}$ with $\lambda_n\rightarrow 0$.
Since $\lambda_n\rightarrow0$, so for $\alpha > 0$, $\exists$ $k\in \N$ such that $|\lambda_n|<\alpha$, $\forall n \geq k$ $\Longrightarrow$ $\lambda_n x_n\in \lambda_n A\subseteq U$, $\forall n \geq k$ [ by (1) ]. As $U$ is arbitrary neighbourhood of $\theta$, so $\lambda_n x_n\rightarrow \theta.$

Conversely, let the given condition hold. If possible, let $A$ be not bounded. Then $\exists$ a neighbourhood $U$ of $\theta$ such that for $\frac{1}{n}>0$ ( where, $n\in\N$ ), $\exists \lambda_n\in\K\smallsetminus\{0\}$ such that $|\lambda_n|$ $\leq$ $\frac{1}{n}$ and $\lambda_n A$ $\nsubseteq$ $U\Rightarrow\exists$ $x_n\in A$ such that $\lambda_nx_n\notin U,\forall\,n\in\N$. Thus $\{x_n\}$ is a sequence in $A$ such that $\lambda_n x_n$ $\nrightarrow\theta$ where $\lambda_n\rightarrow0$, which is a contradiction. Hence $A$ is a bounded set.
\end{proof}

\begin{Res} Every finite subset of a topological evs $X$ over the field $\K$ is bounded.
\label{ch4:r:6}\end{Res}

\begin{proof} Let $A= \{x_1,x_2,...,x_n\}$ be a finite set in $X$ and $V$ be a neighbourhood of $\theta$. By result \ref{ch4:r:1}, $V$ is absorbing. So, for each $x_i\in A$ $\big(\text{where } i\in\{1,2,...,n\}\big)$, $\exists$ a real number $\alpha_i>0$ such that $\mu x_i \in V$, $\forall \mu\in \mathbb{K}$ with $|\mu|$ $\leq$ $\alpha_i$. Let $\alpha=\min\{\alpha_i:i=1,2,...,n\}$. Then, $\alpha>0$ and $\mu A\subseteq V$, $\forall \mu\in \mathbb{K}$ with $|\mu|\leq\alpha $. Hence $A$ is bounded.
\end{proof}

\begin{Res} Every compact subset in a topological evs $X$ over the field $\K$ is bounded.
	\label{ch4:r:compact}\end{Res}

\begin{proof} 
	Let $X$ be a topological evs over the field $\K$ and $A$ be a compact subset of $X$. Let $V$ be a balanced open neighbourhood of $\theta$. Then, for each $x\in A, \exists$ a real number $\alpha_x>0$ such that $\mu x\in V$, $\forall \mu\in \mathbb{K}$ with $|\mu|\leq\alpha_x$ [ $\because$ $V$ is absorbing ]. Let $n_x\in\N$ be such that $0<\frac{1}{n_x}<\alpha_x$, $\forall x\in A$. Then, $\frac{1}{n_x}x\in V$, $\forall x\in A$ $\Longrightarrow$ $x\in n_x V$, $\forall x\in A$. Hence $\{ n_xV: x\in A \}$ is an open cover of the compact set $A$. Let $\{ n_xV: x\in A' \}$ [ where, $A'$ is a finite subset of $A$ ] be a finite subcover of $A$. So, $A\subseteq \underset{x\in A'}{\bigcup}n_xV$. Now, let $n:=\max \{n_x:x\in A' \}$. Then, $n_x\leq n$, $\forall x\in A'$ $\Longrightarrow$ $\frac{n_x}{n}\leq1$, $\forall x\in A'$ $\Longrightarrow$ $\frac{n_x}{n}V\subseteq V$, $\forall x\in A'$ [ $\because$ $V$ is balanced ] $\Longrightarrow$ $n_xV\subseteq nV$, $\forall x\in A'$ $\Longrightarrow$ $\underset{x\in A'}{\bigcup}n_xV\subseteq nV$. Thus, $A\subseteq nV$ where, $n>0$. Hence $A$ is a bounded set [ in view of result \ref{ch4:r:8} ].
\end{proof}

\begin{Res} If $A$, $B$ are bounded subsets of a topological evs $X$ over the field $\K$ then $A+B, \lambda A$ are also bounded sets for any $\lambda \in \mathbb{K}.$
\label{ch4:r:7}\end{Res}

\begin{proof} Let $X$ be a topological evs over the field $\K$ and $A$, $B$ be two bounded subsets of $X$. Let $V$ be any neighbourhood of $\theta$. Then by result \ref{ch4:r:2}, $\exists$ a neighbourhood $W$ of $\theta$ such that $W+W\subseteq V$. For $W$, $\exists$ $\alpha, \beta>0$ such that $\mu A\subseteq W$, $\forall \mu\in \mathbb{K}$ with $|\mu|$ $\leq$ $\alpha$ and $\mu B\subseteq W$, $\forall \mu\in \mathbb{K}$ with $|\mu|$ $\leq$ $\beta$. Therefore $\mu(A+B)$ = $\mu A+\mu B$ $\subseteq$ $W+W$ $\subseteq V$, $\forall \mu\in \mathbb{K}$ with $|\mu|$ $\leq$ $\lambda$, where $\lambda=\min \{\alpha, \beta\}$. So, $A+B$ is a bounded set.  

Clearly, if $\lambda =0$ then $\lambda A=\{\theta\}$ which is a bounded set. Let $\lambda \in \mathbb{K}\smallsetminus\{0\}$ and $U$ be any neighbourhood of $\theta$. Then $\exists$ $\alpha>0$ such that $\mu A\subseteq U,$ for $|\mu|\leq\alpha$. Put, $\beta$ = $\frac{\alpha}{|\lambda|}>0.$ Then $|\mu|\leq\beta$ $\Longrightarrow$ $|\mu\lambda|$ =  $|\mu||$ $\lambda$ $|$ $\leq$ $\beta$ $|\lambda|$ = $\alpha$. So $\mu(\lambda A)$ = $(\mu\lambda)A\subseteq U$ for $|\mu|\leq\beta.$ Hence $\lambda A$ is a bounded set.
\end{proof}

\begin{Not} Let $X$ be a topological evs over the field $\K$. For any $x\in X_0$, the primitive $P_x$ of $x$ is the singleton set $\{x\}$. So, $P_x$ is a bounded set for any $x\in X_0.$
\end{Not}

%\newpage
\section{Some Applications of Absorbing and Balanced Sets}\markright{Some Applications of Absorbing and Balanced Sets}

 Consider the evs $[0,\infty)$ over the field $\K$ (example \ref{ch1:e:0inf}). For any $r>0$, $[0, r)$ is an absorbing set in  $[0,\infty)$ $\big[\because$ for any $x>0$, $\exists\ \epsilon>0$ such that $\epsilon x < r$ and so for any $\alpha \in\K$ with $|\alpha|\leq\epsilon$ we have $|\alpha|x \leq \epsilon x<r$ i.e. $\alpha x \in [0, r)$ $\forall \alpha\in\K$ with $|\alpha|\leq\epsilon$  $\big]$. Let $x,y \in [0,\infty)$ with $x\neq y.$ Without loss of generality, let $x<y$. So, $\exists\,r>0$ such that $x<r<y$ . Therefore  $[0,r)$ is an absorbing set containing $x$ but not containing $y.$ Thus, in the evs $[0,\infty),$ for any two distinct points, $\exists$ an absorbing set containing one point but not the other.
 
 Let $\X$ be a vector space over the field $\K$ of real or complex numbers. Let
 $\mathscr L(\X)$ be the evs of all linear subspaces of $\X$ over $\K$ (example \ref{ch1:e:LX}). Let $\mathscr U$ be any proper subset of $\mathscr {L}(\X)$ containing $\{\theta\}$ and $Y\in \mathscr {L}(\X)\smallsetminus\mathscr U.$ Then for any $\alpha>0$, $\alpha Y=Y \notin\mathscr U \Rightarrow\mathscr U$ cannot be an absorbing set. Hence the only absorbing set in $\mathscr{L}(\X)$ is $\mathscr {L}(\X)$ itself. So for two distinct points of $\mathscr {L}(\X)$, there does not exist an absorbing set that contains one of the points but not the other.
 
 Thus, in some evs, the set of all absorbing sets can distinguish points in the above sense and in some evs it does not do so. Therefore, we can make the following definition:
 
 \begin{Def} An evs $X$ over the field $\mathbb{K}$ is said to be \emph{radial}\index{Radial evs} if for any $x, y \in X$ with $x \neq y, \exists$ an absorbing set which contains any one of $x$ and $y$ but not the both. 
 \end{Def}
 
 \begin{Th} The property of an evs to be radial is an evs property.
 \end{Th}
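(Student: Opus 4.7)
The plan is to show that radiality is preserved by any order-isomorphism, which is precisely what the definition of an \emph{evs property} demands. Suppose $X$ and $Y$ are order-isomorphic evs over $\K$ via an order-isomorphism $\phi:X\to Y$, and suppose $X$ is radial. I would like to conclude that $Y$ is radial; the reverse implication then follows by applying the same argument to $\phi^{-1}$, which is again an order-isomorphism.

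The key input is Lemma \ref{ch4:l:1}, which already establishes that the family $\mathscr{U}_Y$ of all absorbing sets in $Y$ is exactly $\{\phi(U):U\in\mathscr{U}_X\}$. With this in hand the rest is just a bijectivity bookkeeping argument. Given two distinct points $y_1,y_2\in Y$, I would set $x_i:=\phi^{-1}(y_i)$ for $i=1,2$; since $\phi$ is a bijection, $x_1\neq x_2$. Applying the radial property of $X$, there is an absorbing set $A\subseteq X$ containing exactly one of $x_1, x_2$; without loss of generality say $x_1\in A$ and $x_2\notin A$.

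Now I would claim that $\phi(A)\in\mathscr{U}_Y$ is the separating absorbing set. That $\phi(A)$ is absorbing is immediate from Lemma \ref{ch4:l:1}. Clearly $y_1=\phi(x_1)\in\phi(A)$. Finally, $y_2\notin\phi(A)$: for if $\phi(x_2)=\phi(a)$ for some $a\in A$, the injectivity of $\phi$ would force $x_2=a\in A$, contradicting $x_2\notin A$. Thus $\phi(A)$ contains $y_1$ but not $y_2$, so $Y$ is radial.

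There is no real obstacle here since the heavy lifting is done by Lemma \ref{ch4:l:1}; the only ingredient beyond that lemma is the bijectivity of $\phi$ (to pull distinct points back to distinct points and to prevent the image of the separating set from accidentally picking up the unwanted point). Symmetry via $\phi^{-1}$ closes the equivalence, and this establishes that radiality is an evs property.
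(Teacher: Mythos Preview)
Your proof is correct and follows essentially the same approach as the paper: pull back distinct points via the bijection $\phi$, use radiality of $X$ to obtain a separating absorbing set $A$, and invoke Lemma~\ref{ch4:l:1} to conclude that $\phi(A)$ is the required separating absorbing set in $Y$. Your write-up is slightly more careful in spelling out the injectivity argument for $y_2\notin\phi(A)$ and in noting the symmetry via $\phi^{-1}$, but the substance is identical.
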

 \begin{proof} Let $X$ and $Y$ be order-isomorphic evs over the same field $\K$ and $\phi:X\rightarrow Y$ be an order-isomorphism. Let $X$ be a radial evs and $y_1 = \phi(x_1), y_2 =\phi(x_2) \in Y$ for some $x_1, x_2 \in X$ with $y_1\neq y_2.$ Then $x_1\neq x_2$. Since $X$ is radial, $\exists$ an absorbing set $A$ in $X$ such that $A$ contains one of $x_1$ and $x_2$ but not the both. Without loss of generality, let, $x_1\in A$ but $x_2 \notin A$ . Then $y_1 \in \phi(A)$ but $y_2 \notin \phi(A).$ Now, by lemma \ref{ch4:l:1}, $\phi(A)$ is an absorbing set. Hence $Y$ is also a radial evs. Thus, the property of an evs to be radial is an evs property.
 \end{proof}
 
 \begin{Th} The property of an evs to be radial is a productive property.
 \end{Th}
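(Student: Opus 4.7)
The plan is to show directly that for any family $\{X_i : i \in \Lambda\}$ of radial evs over $\K$, the product evs $X := \prod_{i\in\Lambda} X_i$ (with the componentwise operations and the coordinatewise partial order defined earlier) is radial. The natural strategy is to separate two distinct points in $X$ by absorbing sets obtained by ``lifting'' an absorbing set from one coordinate factor to the whole product via the appropriate projection.

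Concretely, fix $x=(x_i)_i$ and $y=(y_i)_i$ in $X$ with $x\neq y$. First I would choose an index $i_0\in\Lambda$ for which $x_{i_0}\neq y_{i_0}$; such an index exists by the definition of equality in the Cartesian product. Since $X_{i_0}$ is radial, there is an absorbing subset $A_{i_0}\subseteq X_{i_0}$ which contains exactly one of $x_{i_0}, y_{i_0}$ (without loss of generality, $x_{i_0}\in A_{i_0}$ and $y_{i_0}\notin A_{i_0}$). I would then form
\[
A := \{z=(z_i)_i\in X : z_{i_0}\in A_{i_0}\},
\]
that is, the set $\prod_{i\in\Lambda} B_i$ with $B_{i_0}=A_{i_0}$ and $B_i=X_i$ for $i\neq i_0$. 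Clearly $x\in A$ and $y\notin A$, so $A$ separates $x$ and $y$ in the required one-sided sense.

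It remains to verify that $A$ is absorbing in $X$. Given an arbitrary $z=(z_i)_i\in X$, use that $A_{i_0}$ is absorbing in $X_{i_0}$ to obtain $\alpha>0$ with $\mu z_{i_0}\in A_{i_0}$ for every $\mu\in\K$ with $|\mu|\leq\alpha$. By the definition of scalar multiplication in $X$, $(\mu z)_{i_0}=\mu z_{i_0}\in A_{i_0}$, while $(\mu z)_i\in X_i=B_i$ for all $i\neq i_0$; hence $\mu z\in A$ whenever $|\mu|\leq\alpha$. Since $z$ was arbitrary, $A$ is absorbing, completing the argument.

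I do not expect any real obstacle here: the only thing to be a little careful about is that ``absorbing'' is a purely one-point (or rather, one-element-at-a-time) condition, so lifting along a single projection preserves it without any compatibility issue across coordinates. The possible pitfall would be trying instead to take a product of absorbing sets from every factor, which need not be absorbing in $X$ in general; restricting only the $i_0$-th coordinate circumvents this entirely.
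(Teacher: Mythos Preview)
Your proposal is correct and follows essentially the same approach as the paper: pick a coordinate where the two points differ, use radiality of that factor to get an absorbing set separating the coordinates, and lift it to the product by taking the full space in all other coordinates. The paper's verification that this lifted set is absorbing is identical to yours.
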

 
 \begin{proof} Let $\{X_\alpha : \alpha \in \Lambda\}$ be a collection of radial evs over the same field $\mathbb{K}$ where, $\Lambda$ is an index set and $X = \underset{\alpha \in \Lambda}{\Pi}X_\alpha$. Let $x=(x_\alpha)_{\alpha \in \Lambda}$, $y=(y_\alpha)_{\alpha \in \Lambda} \in X$ with $x \neq y$. Then $\exists\,\lambda \in \Lambda$ such that $x_\lambda \neq y_\lambda$. Since $X_\lambda$ is radial, $\exists$ an absorbing set $A_\lambda$ in $X_\lambda$ such that $A_\lambda$ contains any one of $x_\lambda$ and $y_\lambda$ but not the both. Let $A_\lambda$ contains $x_\lambda$ but not $y_\lambda$. Then  $x \in \underset{\alpha \in \Lambda}{\Pi}Y_\alpha$ but $y \notin \underset{\alpha \in \Lambda}{\Pi}Y_\alpha$ where, $Y_\alpha = X_\alpha$ for $\alpha \in \Lambda \smallsetminus\{\lambda\}$ and $Y_\lambda = A_\lambda.$ Now, it is enough to prove that $\underset{\alpha \in \Lambda}{\Pi}Y_\alpha$ is an absorbing set in $X$. Let $z=(z_\alpha)_{\alpha \in \Lambda} \in X$. Now, $z_\lambda \in X_\lambda$ and $A_\lambda$ is an absorbing set. So, $\exists \beta>0$ such that $\mu z_\lambda \in A_\lambda, \forall \mu \in \mathbb{K}$ with $|\mu|\leq\beta$. Also, for any $\alpha \in \Lambda\smallsetminus \{\lambda\}$, $\mu z_\alpha \in X_\alpha$, $\forall \mu \in \mathbb{K}$ with $|\mu|\leq\beta$. Thus, $\mu z= (\mu z_\alpha)_{\alpha \in \Lambda} \in  \underset{\alpha \in \Lambda}{\Pi}Y_\alpha,  \forall \mu \in \mathbb{K}$ with $|\mu|\leq\beta$ . Therefore  $\underset{\alpha \in \Lambda}{\Pi}Y_\alpha$ is an absorbing set. So, $X$ is also a radial evs. Hence the property of an evs to be radial is a productive property.
 \end{proof}
 
 \begin{Th} The property of an evs to be radial is a hereditary property.
 \end{Th}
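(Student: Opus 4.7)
The plan is to show that if $X$ is a radial evs and $Y$ is a sub-evs of $X$, then $Y$ inherits the radial property by \emph{restricting} absorbing sets from $X$ to $Y$. Concretely, given two distinct points $y_1, y_2 \in Y$, I will use radiality of $X$ to separate them by an absorbing set $A \subseteq X$ and then verify that $A \cap Y$ is absorbing \emph{inside} $Y$ and still separates $y_1$ from $y_2$.

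First I will set up the statement: let $X$ be a radial evs over $\K$, let $Y$ be a sub-evs of $X$, and pick $y_1, y_2 \in Y$ with $y_1 \neq y_2$. Since $Y \subseteq X$, these are also distinct in $X$, so by radiality there exists an absorbing set $A \subseteq X$ containing exactly one of them, say $y_1 \in A$ and $y_2 \notin A$. I will then set $B := A \cap Y$; clearly $y_1 \in B$ and $y_2 \notin B$, so $B$ separates $y_1$ and $y_2$ inside $Y$.

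The substantive step is to check that $B$ is absorbing in $Y$, and this is where I will need to use the subevs structure. Given any $y \in Y$, absorbingness of $A$ in $X$ supplies $\alpha > 0$ such that $\mu y \in A$ for all $\mu \in \K$ with $|\mu| \leq \alpha$. To conclude $\mu y \in B = A \cap Y$, I need $\mu y \in Y$ for every $\mu \in \K$. This follows from the characterisation of sub-evs in Note \ref{n:subevs}: since $\theta \in Y$ (the additive identity of $Y$ coincides with that of $X$, as $Y_0 \subseteq X_0$ and any identity of a sub-semigroup embedded in $X_0$ must equal $\theta$ after cancelling in the primitive vector space $X_0$), condition (i) of Note \ref{n:subevs} applied with $x = y$ and the second summand $\theta$ gives $\mu y = \mu y + \theta \in Y$. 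Hence $\mu y \in A \cap Y = B$, and $B$ is absorbing in $Y$.

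I do not anticipate any real obstacle: the argument is essentially a direct restriction plus the observation that sub-evs are closed under scalar multiplication. The one point that requires a moment of care is justifying $\mu y \in Y$ for arbitrary $\mu \in \K$ (including $\mu = 0$) purely from the subevs axioms, which is why I will spell out the appeal to $\theta \in Y$ and condition (i) of Note \ref{n:subevs}. Everything else (that $B$ picks out the right one of $y_1, y_2$, and that the choice $\alpha$ inherited from $X$ works verbatim in $Y$) is immediate.
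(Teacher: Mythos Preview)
Your proposal is correct and follows essentially the same route as the paper: intersect the separating absorbing set $A\subseteq X$ with $Y$ and check that $A\cap Y$ is absorbing in $Y$ using closure of the subevs under scalar multiplication. The only difference is cosmetic --- you spell out why $\theta\in Y$ and hence why $\mu y\in Y$ via Note~\ref{n:subevs}(i), whereas the paper simply asserts ``since $Y$ is a subevs so $\mu z\in Y$ for any $\mu\in\mathbb{K}$'' without further comment.
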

 \begin{proof} Let $X$ be a radial evs over the field $\mathbb{K}$ and $Y$ be any subevs of $X$. Let $x, y \in Y$ with $x \neq y$. Since $X$ is radial, $\exists$ an absorbing set $A\subset X$ such that $A$ contains any one of $x$ and $y$ but not the both. Let $x\in A$ but $y \notin A$. Put, $A_Y = A\cap Y$. Clearly, $\theta \in A_Y$ i.e $A_Y\neq \emptyset$. Also, $x\in A_Y, y\notin A_Y.$ Let, $z \in Y$ be arbitrary. Since, $A$ is absorbing, $\exists \beta>0$ such that  $\mu z \in A, \forall \mu \in \mathbb{K}$ with $|\mu|\leq\beta$. Since $Y$ is a subevs so $\mu z \in Y$ for any $\mu\in\mathbb{K}$ . Therefore  $\mu z \in A_Y,  \forall \mu \in \mathbb{K}$ with $|\mu|\leq\beta$. Thus, $A_Y$ is an absorbing set in the evs $Y \Rightarrow Y$ is also radial.  Hence the property of an evs to be radial is a hereditary property.
 \end{proof}
 
 \begin{Res} Every topological evs $X$ over the field $\mathbb{K}$ is radial.
 \end{Res}

 \begin{proof} Let $X$ be a topological evs over the field $\mathbb{K}$. Then, by result \ref{ch4:r:1}, any neighbourhood of $\theta$ is absorbing. Let $x, y \in X$ with $x\neq y$. Without loss of generality, let $y\neq \theta.$  Since $X$ is a topological evs, it is $T_2$. So, $\exists$ a neighbourhood $U$ of $\theta$ which does not contain $y$. Now, $U$ is absorbing $\Rightarrow U\cup \{x\}$ is also absorbing (By propoition \ref{ch4:p:a1} (iii)). Thus, $U\cup \{x\}$ is an absorbing set containing $x$ but not containing $y$ where, $x,y\in X$ are arbitrary.  Therefore, $X$ is a radial evs.
 \end{proof}
 
 \begin{Not} Converse of the above result is not true. Consider the non-topological evs $X = \mathscr{D}^2[0,\infty)$ (example \ref{ch1:e:d2inf}). Consider the set $[0,r_1)\times[0,r_2) \in X$ where $r_1, r_2>0$. Let $(x, y)\in X$ be arbitrary. We can choose $\epsilon>0$ such that $\epsilon x<r_1$ and $\epsilon y<r_2$. i.e. $\epsilon(x, y)\in[0,r_1)\times[0,r_2)$. Also, for any $\mu\in \mathbb{K}$ with $|\mu|\leq\epsilon$,  $\mu(x,y)=(|\mu| x, |\mu| y)\in [0,r_1)\times[0,r_2)$ . Therefore  $[0,r_1)\times[0,r_2)$ is an absorbing set. Now, let $(x_1,y_1), (x_2, y_2)\in X$ with $(x_1,y_1)\neq(x_2,y_2)$. As, $x_1, x_2, y_1, y_2$ cannot be all zero, without loss of generality, let $x_1\neq x_2$ and $x_1>x_2.$ Let us choose $r>0$ such that $x_2<r<x_1$. Then, $[0,r)\times[0,y_2+1)$ is an absorbing set containing $(x_2,y_2)$ but not containing $(x_1,y_1).$ Since $(x_1,y_1), (x_2, y_2)\in X$ are arbitrary, so $X$ is a radial evs.
 \end{Not}
  
 We now find the exact form of an open, balanced and absorbing set in the topological evs $[0,\infty)$.

\begin{Lem} In the evs $[0,\infty)$ over the field $\K$, a set is balanced and absorbing iff it is an interval containing  0.
\label{ch4:l:3}\end{Lem}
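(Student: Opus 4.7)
The plan is to exploit the specific structure of the evs $[0,\infty)$: since the scalar multiplication is $\alpha\cdot r=|\alpha|r$, balancedness of a set $A\subseteq[0,\infty)$ reduces to the condition $\beta A\subseteq A$ for all $\beta\in[0,1]$, i.e.\ to $A$ being a down-set with $0\in A$; and being absorbing reduces to the statement that for every $x\in[0,\infty)$ there exists $\alpha>0$ with $[0,\alpha x]\subseteq A$. With this reformulation both directions become essentially order-theoretic.

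For the forward direction, I would start from the observation that balancedness forces $A$ to be a down-set: if $x\in A$ and $\beta\in[0,1]$, then $\beta x = \beta\cdot x\in A$, so $[0,x]\subseteq A$. Combined with $0\in A$ (guaranteed by being absorbing) and the fact that applying the absorbing property to $y=1$ gives some $\alpha>0$ with $[0,\alpha]\subseteq A$, we conclude that $A$ is a non-trivial down-set with minimum $0$. Setting $b:=\sup A\in(0,\infty]$, the three cases $b=\infty$, $b<\infty$ with $b\in A$, and $b<\infty$ with $b\notin A$ yield $A=[0,\infty)$, $A=[0,b]$, and $A=[0,b)$ respectively, each of which is an interval containing $0$.

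For the backward direction, a non-degenerate interval $I\subseteq[0,\infty)$ containing $0$ is necessarily of the form $[0,b)$, $[0,b]$ with $b>0$, or $[0,\infty)$. Each is a down-set, and balancedness is then immediate: for $|\alpha|\leq 1$ and $x\in I$ we have $\alpha\cdot x = |\alpha|x\leq x$, so $\alpha\cdot x\in I$ since $I$ is downward closed. To verify that $I$ is absorbing, for any $x\in[0,\infty)$ I would choose $\alpha>0$ small enough that $\alpha x$ lies inside $I$ (for instance $\alpha=b/(2x)$ in the bounded case when $x>0$, any $\alpha>0$ in the unbounded case, and any $\alpha>0$ when $x=0$); then $|\mu|\leq\alpha$ gives $\mu\cdot x=|\mu|x\leq\alpha x$, which lies in $I$ by the down-set property.

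The only delicate point is the degenerate interval $\{0\}$: it is balanced but fails to be absorbing, since for any $x>0$ and any $\mu\neq 0$ one has $|\mu|x>0\notin\{0\}$. Hence the equivalence should be read with the tacit understanding that "interval" excludes the trivial singleton $\{0\}$; this does not affect the forward direction, because the absorbing condition applied to $y=1$ automatically rules out this case by placing a whole interval $[0,\alpha]$ with $\alpha>0$ inside $A$.
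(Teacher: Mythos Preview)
Your proof is correct and follows essentially the same approach as the paper: both reduce balancedness in $[0,\infty)$ to the down-set property $[0,x]\subseteq A$ for each $x\in A$, use absorbingness to produce some positive element in $A$, and conclude that $A$ is an interval with left endpoint $0$. Your version is slightly more explicit---using $\sup A$ to pin down the interval and flagging the degenerate case $\{0\}$, which the paper tacitly excludes by writing ``let $y\in U$ with $y>0$'' in the converse---but the underlying argument is the same.
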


\begin{proof}Let $U$ be a non-empty balanced and absorbing set in the evs $[0,\infty)$ over the field $\mathbb{K}$. By proposition \ref{ch4:p:b1}, $0\in U.$ Since $U$ is absorbing, $\exists\ x \in U$ with $x>0$ $(\because \{0\}$ is not an absorbing set). Let $0<y<x$. Then $y=\frac{y}{x}.x \in U$ ($\because\ |\frac{y}{x}|<1$ and $U$ is balanced). So, $[0,x]\subseteq U$ for any $x \in U$. Hence $U$ is an interval containing 0. 

Conversely, let $U$ be an interval containing 0. Let $z\in [0,\infty)$ and $y\in U$ with $y>0$. We can find $\epsilon >0$ such that $\epsilon z <y$. So, for any $\mu\in\mathbb{K}$ with $|\mu|\leq\epsilon$ we have, $|\mu|z \leq \epsilon z<y$ i.e. $\mu z \in U$ for $|\mu|\leq\epsilon \Rightarrow U$ is absorbing. Also, for any $\alpha\in\mathbb{K}$ with $|\alpha|\leq 1$ and any $x\in U, |\alpha|x\leq x\Rightarrow\alpha x\in U \Rightarrow\alpha U\subseteq U\Rightarrow U$ is balanced. Hence in $[0,\infty)$, a set is balanced and absorbing iff it is an interval containing  0.
\end{proof}

\begin{Lem} If $[0,\infty)$ is a topological evs with a topology (may be other than the usual subspace topology) defined on it then an open, balanced and absorbing set $`U$' is of the form : $U=[0,a)$ where, $a$ may be a finite number or $\infty$.
\label{ch4:l:4}\end{Lem}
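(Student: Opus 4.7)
The plan is to combine the algebraic characterisation from Lemma \ref{ch4:l:3} with the continuity of scalar multiplication at a carefully chosen point. First, since $U$ is balanced and absorbing, Lemma \ref{ch4:l:3} gives that $U$ is an interval containing $0$. Because $U$ is absorbing, $U \neq \{0\}$ (no $x > 0$ could be absorbed into $\{0\}$, since $|\mu|x > 0$ whenever $\mu \neq 0$). Hence $U$ contains some $b > 0$ and, being an interval containing $0$ in $[0,\infty)$, it must take the form either $[0,a)$ or $[0,a]$ for some $a \in (0,\infty]$, with the convention that $a = \infty$ corresponds to $U = [0,\infty)$.

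The remaining task is to rule out $U = [0,a]$ for finite $a > 0$. The strategy is to use continuity of scalar multiplication at $(1,a)$ to force $U$ to contain values strictly larger than $a$, contradicting $U = [0,a]$. Specifically, define $\psi : \K \to [0,\infty)$ by $\psi(\mu) := \mu \cdot a = |\mu|a$; this is continuous as the composition of $\mu \mapsto (\mu,a)$ with scalar multiplication, both continuous by the topological evs structure. Since $U$ is open and $\psi(1) = a \in U$, the preimage $\psi^{-1}(U)$ is a neighbourhood of $1$ in $\K$. Hence $\exists \delta > 0$ such that $|\mu|a \leq a$, equivalently $|\mu| \leq 1$, whenever $|\mu - 1| < \delta$. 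Choosing the real number $\mu = 1 + \delta/2$ yields $|\mu| > 1$, which is the desired contradiction.

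Combining both steps leaves only $U = [0,a)$ with $a \in (0,\infty]$, which is the asserted form. There is no substantial obstacle: once Lemma \ref{ch4:l:3} is applied to reduce to two possible shapes, the entire content lies in recognising that continuity of the one-variable map $\mu \mapsto |\mu|a$ at $\mu = 1$ is enough to exclude the closed endpoint, and this uses only the fact that the given (otherwise arbitrary) topology makes scalar multiplication continuous.
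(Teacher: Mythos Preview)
Your proof is correct and follows essentially the same route as the paper: invoke Lemma~\ref{ch4:l:3} to reduce $U$ to an interval containing $0$, then use continuity of the map $\mu\mapsto \mu\cdot a$ at $\mu=1$ (with $U$ open) to find a scalar of modulus $>1$ still landing in $U$, contradicting $U=[0,a]$. The only differences are cosmetic---you make the exclusion of $U=\{0\}$ explicit and phrase the continuity argument via the one-variable map $\psi$ rather than directly via the disc $B(1,\epsilon)$, but the substance is identical.
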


\begin{proof} Let $U$ be an open, balanced and absorbing set in the topological evs $[0,\infty)$ with some topology defined on it. By lemma \ref{ch4:l:3}, $U$ is an interval containing 0. If possible, let $U=[0,a]$ for some $a\in (0,\infty)$. Since, $U$ is open and $1.a=a\in U$ so, $\exists$ an open disc $B(1,\epsilon)$ in $\mathbb{K}$ such that $B(1,\epsilon).a\subseteq U$ \big[since the scalar multiplication is continuous as, $ [0,\infty) $ is a topological evs\big]. Choose $t\in B(1,\epsilon)$ with $| t | >1.$ Then $t.a=| t |a >a \in U$ ------ which is a contradiction, since $ U=[0,a] $. Hence, $U$ must be of the form $[0,a)$ where, $a$ may be a finite number or $\infty$.
\end{proof}

\begin{Th} The usual subspace topology is the finest topology with respect to which $[0,\infty)$ forms a topological evs over the field $\K$. 
\label{ch4:t:[0,infty)}\end{Th}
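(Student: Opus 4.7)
The plan is as follows. The usual subspace topology $\tau_u$ on $[0,\infty)$ already makes $[0,\infty)$ a topological evs (Example \ref{ch1:e:0inf}), so what must be proved is that every topology $\tau$ on $[0,\infty)$ making it a topological evs satisfies $\tau \subseteq \tau_u$. To this end, I fix an arbitrary $\tau$-open set $G$ and a point $x \in G$ and aim to exhibit a $\tau_u$-open neighbourhood of $x$ contained in $G$. The argument splits naturally into the cases $x = 0$ and $x > 0$.

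For the case $x = 0$, the strategy is to apply Result \ref{ch4:r:1} to obtain a balanced $\tau$-neighbourhood $V \subseteq G$ of $0$, then pass to its $\tau$-interior $V^\circ$. Since each map $M_\alpha : y \mapsto \alpha y$ with $\alpha \in \K \setminus \{0\}$ is a $\tau$-homeomorphism (the same fact underlying Result \ref{ch4:r:5}), one checks that $V^\circ$ remains balanced, and since $V^\circ$ is a neighbourhood of $0$ it is absorbing by Result \ref{ch4:r:1}. Lemma \ref{ch4:l:4} then forces $V^\circ = [0, a)$ for some $a \in (0, \infty]$, which is $\tau_u$-open and contained in $G$.

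For the case $x > 0$, I will exploit continuity of scalar multiplication in the first variable: the map $\psi : \K \to ([0,\infty), \tau)$ given by $\psi(t) = t \cdot x = |t|\,x$ is continuous with $\psi(1) = x \in G$. Hence there exists $\epsilon \in (0,1)$ such that $\psi(t) \in G$ for every $t \in \K$ with $|t - 1| < \epsilon$. Restricting to real scalars $t \in (1-\epsilon, 1+\epsilon) \subseteq (0,2)$, one has $|t| = t$, so $((1-\epsilon)x, (1+\epsilon)x) = \{tx : t \in (1-\epsilon, 1+\epsilon)\} \subseteq G$, yielding a $\tau_u$-open neighbourhood of $x$. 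Combining the two cases, $G$ is $\tau_u$-open, completing the inclusion $\tau \subseteq \tau_u$.

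The step I expect to be the main obstacle is the $x = 0$ case, specifically verifying that the $\tau$-interior $V^\circ$ of the balanced neighbourhood supplied by Result \ref{ch4:r:1} remains balanced, so that Lemma \ref{ch4:l:4} becomes applicable. This reduces to the homeomorphism property of $M_\alpha$ for $\alpha \neq 0$ (to handle $0 < |\alpha| \leq 1$) together with the observation that $0 \in V^\circ$ (to handle $\alpha = 0$); both are routine but must be spelled out. The $x > 0$ case, by contrast, is a fairly mechanical application of continuity of scalar multiplication, and no analogous technicality arises.
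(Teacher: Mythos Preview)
Your proof is correct, and it takes a genuinely different route from the paper's. The paper works globally: it invokes Result~\ref{ch4:r:3} to decompose an arbitrary $\tau'$-open set $G$ as $\bigcup_{x\in G}(x+U_x)$ with each $U_x$ open, balanced and absorbing, applies Lemma~\ref{ch4:l:4} to every $U_x$ so that each summand is a half-open interval $[x,x+a_x)$, and then combines these into maximal pairwise-disjoint intervals $H_j$ and argues (via continuity of scalar multiplication at $(1,a)$ for an endpoint $a$) that each $H_j$ not containing $0$ must in fact be open on both sides. Your argument is pointwise and splits on whether $x=0$ or $x>0$: at $0$ you pass to the interior $V^\circ$ of the balanced neighbourhood furnished by Result~\ref{ch4:r:1}, check that $V^\circ$ remains balanced (using that each $M_\alpha$, $\alpha\neq 0$, is a $\tau$-homeomorphism together with $0\in V^\circ$), and then invoke Lemma~\ref{ch4:l:4} once; at $x>0$ you bypass Lemma~\ref{ch4:l:4} entirely and obtain the usual-open neighbourhood $((1-\epsilon)x,(1+\epsilon)x)$ directly from continuity of $t\mapsto t\cdot x$ at $t=1$.

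What each approach buys: the paper's method makes the structure of $\tau'$-open sets fully explicit (as unions of intervals of a prescribed shape), at the cost of the somewhat delicate interval-combining step and the endpoint argument. Your method is shorter and more self-contained for $x>0$, needing only one-variable continuity of scalar multiplication rather than the full decomposition of Result~\ref{ch4:r:3}; the trade-off is that you must verify the auxiliary fact that the interior of a balanced neighbourhood is balanced, which the paper avoids because Result~\ref{ch4:r:3} already supplies neighbourhoods that are simultaneously open and balanced. Both proofs ultimately rest on the same key ingredient, Lemma~\ref{ch4:l:4}, to handle the point $0$.
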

	\begin{proof} Let $\tau$ denote the usual subspace topology on $[0,\infty)$ and $\tau^ \prime$ be any other topology that makes it a topological evs. Let $G\in\tau^\prime$. Since ($[0,\infty),\tau^\prime$) is a topological evs so by result \ref{ch4:r:3}, $G$ can be expressed as $G= \underset{x\in G}{\bigcup} (x+U_x)$ where, $U_x$ is an open, balanced and absorbing neighbourhood of $0$ in $\tau^\prime$, $\forall x \in G$. Now, by lemma \ref{ch4:l:4}, $U_x$ is an interval of the form $[0,a_x), \forall x \in G \Rightarrow x+U_x$ is also an interval of the form $[x,x+a_x), \forall x \in G.$ Now, we can combine intersecting intervals to express $G$ as :\\ \centerline
		{$G= \underset{j\in I}{\bigcup} H_j$ where,}\\ (i) $\{H_j : j\in I\}$ is a collection of pairwise disjoint intervals, $I$ being an index set and\\ (ii) for $H_i\neq H_j,$ no end point of $H_i$ can be an end point of $H_j$. \big[Here, (ii) ensures that the set of the form $[a,b)\cup [b,c)$ is taken as single interval [a,c) instead of taking union of two disjoint intervals $[a,b)$ and $[b,c)$ while constructing $H_i$'s.\big]

\underline{Claim:}  For any $j\in I$, if $0\in H_j$ then $H_j = [0,a)$ where, $a$ is a finite number or infinity; otherwise, $H_j$ is an open interval.

If not, let $H_j$ be a left closed interval having left end point $a>0$ for some $j\in I$. Now, $1.a=a\in G$ and `$\cdot$' is continuous so, $\exists \epsilon>0$ such that $B(1,\epsilon).a \subseteq G$. Since $H_i\cap H_j=\emptyset$ for $i\neq j$ and $a\in H_j$, we can choose $t\in B(1,\epsilon)$ very close to 1 with $| t|<1$ such that $t.a=| t|a \notin H_i$ for any $i\neq j$. Also, $| t|a<a$ $\Longrightarrow t.a \notin H_j$. Therefore $t.a \notin H_i$ for any $i\in I.$ i.e. $t.a\notin G$, which is a contradiction. Similarly, we can prove that $H_j$ can not be right closed. Hence, $H_j$ must be an open interval if $0\notin H_j$, $\forall j\in I.$  Thus, $G$ is union of intervals of the form $(a,b)$ or $[0,b)$ where, $a\in [0,\infty)$ and $b$ may be a finite number or $\infty \Longrightarrow$ $G \in \tau$. Now, $G\in\tau^\prime$ is arbitrary, so $\tau^\prime \subseteq \tau.$ Since $\tau^\prime$ is an arbitrary topology with respect to which $[0,\infty)$ forms a topological evs, so the usual topology is the finest topology that can make [0,$\infty)$ a topological evs.
\end{proof}


\begin{thebibliography}{99}

\bibitem{Mi} E. Michael, {\em Topologies on spaces of subsets}, Trans.Amer.Math.Soc.
 71(1951), 152-182.
 
\bibitem{C(X)} S. Ganguly, S. Mitra \& S. Jana, {\em An Associated Structure Of A Topological Vector Space} Bull.Cal.Math.Soc; Vol-96, No.6 (2004), 489-498.

\bibitem{Nach} Leopoldo Nachbin, {\em Topology And Order},
D.Van Nostrand Company, Inc. (1965)

\bibitem{mor} S. Ganguly \& S. Mitra, {\em More on topological quasi-vector space},
Revista de la Academia Canaria de Ciencias; Vol.22 No.1-2 (2010), 45-58.

\bibitem{qvs}S. Ganguly \& S. Mitra {\em A note on topological quasi-vector space} Rev. Acad. Canar. Cienc., XXIII, No. 1-2 (2011), 9-25 

\bibitem{spri} S. Jana \& J. Saha, {\em A Study of Topological quasi-vector Spaces},
Revista de la Academia Canaria de Ciencias; XXIV, No.1 (2012), 7-23 

\bibitem{bal} Jayeeta Saha \& Sandip Jana, {\em A Study of Balanced Quasi-Vector Space}, Revista de la Academia Canaria de Ciencias; Vol.-XXVII (2015), 8-28

\bibitem{evs} Priti Sharma \& Sandip Jana, {\em An algebraic ordered extension of vector space}, Transactions of A. Razmadze Mathematical Institute; Elsevier \textbf{172} (2018) 545-558.
https://doi.org/10.1016/j.trmi.2018.02.002



\end{thebibliography}
\end{document}